\newtheorem{theorem}{Theorem}[section]
\newtheorem{lemma}{Lemma}[section]
 \newcommand{\bproof}{\paragraph {\bf Proof.}}
\newcommand{\RR}{{\mathbb{R}}}
\newcommand{\ZZ}{{\mathbb{Z}}}
\begin{document}

\title{Stabilization of Heegaard splittings}
\author{Joel Hass}
\address{Department of Mathematics, University of California, Davis
California 95616}
\email{hass@math.ucdavis.edu}
\thanks{All three authors were partially supported by NSF grants}
\author{Abigail Thompson}
\address{Department of Mathematics, University of California, Davis
California 95616}
\email{thompson@math.ucdavis.edu}
 \author{William Thurston}
\address{Department of Mathematics, Cornell University,
Ithaca, NY 14853}
\email{wpt@math.cornell.edu}
\subjclass{Primary 57M25}
\keywords{Heegaard splitting, stabilization, harmonic map, isoperimetric inequality}

\date{\today}

\begin{abstract}
For each $g \ge 2$ there is a 3-manifold with two genus $g$ Heegaard splittings  that require $g$ stabilizations to become equivalent.  
Previously known examples required at most one stabilization.  Control of families of Heegaard surfaces is obtained through a deformation to harmonic maps. 
\end{abstract}

\maketitle

\section{Introduction} \label{intro}

While area minimizing surfaces have proved to be a powerful tool in the study of 3-dimensional manifolds, harmonic maps of surfaces to 3-manifolds have not been as widely applied, due to several limitations. A homotopy class of surfaces generally gives rise to a large space of harmonic maps. A harmonic map of a surface need not minimize self-intersections and may fail to be be immersed.  In negatively-curved 3-manifolds there is a unique harmonic map in each conformal class of metrics on the domain, and smooth families of surfaces give rise to smooth families of harmonic maps \cite{EellsSampson},\cite{Hartman}. In this paper we study Heegaard splittings of 3-manifolds using families of harmonic surfaces.

A {\em genus $g$ Heegaard splitting} of a 3-manifold $M$ is a decomposition of $M$ into two genus $g$ handlebodies with a common boundary. It is described by an ordered triple $(H_1, H_2, S)$ where each of  $H_1, H_2$ is a handlebody and the two handlebodies intersect along their common boundary $S$, called a {\em Heegaard surface}. 
Two Heegaard splittings $(H_1, H_2, S)$  and $(H_1', H_2', S')$  of $M$ are equivalent if an ambient isotopy of $M$ carries $(H_1, H_2, S)$  to $(H_1', H_2', S')$.
Every 3-manifold has a Heegaard splitting \cite{Moise}, and Heegaard splittings form one of the basic structures used to analyze and understand 3-manifolds.

Corresponding to the Heegaard splitting is a family of surfaces that sweep out the manifold, starting with a core of one handlebody and ending at a core of the second. This family is geometric controlled by deforming it to a family of harmonic maps.  When the manifold is negatively curved, harmonic maps of genus $g$ surfaces have uniformly bounded area. In the manifolds we consider, the geometry forces small area surfaces to line up with small area cross sections of the manifold. As a result we obtain obstructions to the equivalence of distinct Heegaard splittings.

A {\em stabilization} of a genus $g$ Heegaard surface is a surface of genus $g+1$ 
obtained by adding a 1-handle whose core is parallel to the surface. Such a surface splits the manifold
into two genus $g+1$ handlebodies, and thus gives a new Heegaard splitting.
Two genus $g$ Heegaard splittings are {\em $k$-stably equivalent} if they become equivalent after $k$ stabilizations.
Any two Heegaard splittings become equivalent after a sequence of  stabilizations  \cite{SInger}.
An upper bound on the number of stabilizations needed
to make two splittings  equivalent is known in some
cases. If $G_p$ and $G_q$ are splittings of genus $p$ and $q$ with $p \le q$, and $M$ is non-Haken, then Rubinstein and Scharlemann obtained an upper bound of $5p + 8q -9$ for the genus of a common stabilization \cite{RubinsteinScharlemann:96} . For all previously known examples of manifolds with distinct splittings, the splittings become equivalent after a single stabilization of the larger genus Heegaard surface. 
The question of whether a single stabilization always suffices is sometimes called the {\em stabilization conjecture} \cite{Kirby}  (Problem 3.89), \cite{Scharlemann}, \cite{Schultens}, \cite{Sedgewick}. In Section~\ref{mainproof} we show that this conjecture does not hold. There are pairs of genus $g$ splittings of a 3-manifold that require $g$ stabilizations to become equivalent.

\begin{theorem} \label{mainthm}
For each $g > 1$ there is a 3-manifold $M_g$ with two genus $g$ Heegaard splittings that require $g$ stabilizations to become equivalent.
\end{theorem}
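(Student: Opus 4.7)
The plan is to construct $M_g$ as a closed negatively curved 3-manifold assembled from $g$ congruent ``building blocks'' $B_1,\dots,B_g$ joined along small-area incompressible surfaces $F_1,\dots,F_{g-1}$, together with two distinguished genus $g$ Heegaard splittings $(H_1,H_2,S)$ and $(H_1',H_2',S')$ arising from sweepouts that traverse the blocks in opposite orders (so that an orientation-reversing-like self-map of $M_g$ would interchange them). The role of negative curvature is to give access to the harmonic-map machinery recalled in the introduction, while the role of the ``thin'' surfaces $F_i$ is to create geometric bottlenecks that any bounded-area surface is forced to respect.

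First I would set up the sweepout obstruction. A genus-$h$ Heegaard splitting of $M_g$ gives a continuous one-parameter family $\{\Sigma_t\}_{t\in[0,1]}$ of genus-$h$ surfaces, starting at a spine of one handlebody and ending at a spine of the other. Because $M_g$ is negatively curved, each $\Sigma_t$ may be deformed within its conformal class to a unique harmonic representative, depending smoothly on $t$, whose area is bounded by a constant $C(h)$ that depends only on $h$ and an upper curvature bound (via a Bochner argument in the spirit of Eells--Sampson and Hartman). An ambient isotopy realizing equivalence after $k$ stabilizations produces a two-parameter family of genus $g+k$ surfaces connecting the two stabilized sweepouts; harmonically representing the entire family yields a family of surfaces all of whose areas are at most $C(g+k)$, and I would choose the block geometry so that this bound stays below the critical threshold whenever $k<g$.

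The key geometric step is then a monotonicity/isoperimetric lemma: a harmonic surface of area below the threshold, sitting inside a block $B_i$ flanked by the thin necks $F_{i-1},F_i$, must decompose up to a piece of controlled area as a disjoint union of parallel copies of $F_i$, and therefore carries a well-defined signed count of algebraic intersections with a fixed transverse arc through $F_i$. Following this signed count as $t$ varies produces a tuple of integers, one per block, that is invariant under homotopies through bounded-area families and is additive in a controlled way under stabilizations: a single stabilization can change this invariant in at most one block. The two sweepouts are engineered so that the invariants differ in every one of the $g$ blocks, forcing $k\ge g$.

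The principal obstacle I expect is the rigidity statement underlying the last paragraph, namely that a harmonic surface of area below the explicit threshold $C(g+k)$ inside a block $B_i$ must in fact look like a bounded number of near-parallel copies of the cross-sectional surface. This requires turning the energy/area bound coming from harmonicity into genuine geometric control on the placement of the surface, presumably via isoperimetric inequalities in the universal cover together with the negative-curvature estimates that force a ``thin'' harmonic disk to be nearly totally geodesic. The explicit tuning of $M_g$, in particular the length of the necks and the diameter of the blocks, has to be chosen so that this rigidity holds uniformly in the genus $g+k$ of the competing family for every $k<g$.
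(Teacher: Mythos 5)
Your high-level strategy — sweep out by harmonic surfaces, exploit uniform area bounds from negative curvature, and cook up a geometric obstruction that blocks interpolation between the two sweepouts — matches the paper's. But the mechanism you propose is both genuinely different and, at the critical step, substantially harder than what the paper actually uses, and I do not think it closes. First, the construction is different: the paper does not join $g$ blocks in a row; it sandwiches a long chain of $2n$ copies of a fixed hyperbolic mapping-torus fundamental domain (with fiber genus $g$) between two fixed handlebodies $H_L$, $H_R$, and the relevant parameter $n$ is taken large \emph{independently} of $g$. The two splittings are literally the same Heegaard surface with reversed sides, not two sweepouts traversing distinct blocks in opposite orders. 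Second, and more seriously, your ``key geometric step'' — that a bounded-area harmonic surface in a block must decompose, up to controlled error, as a disjoint union of near-parallel copies of the cross-sectional fiber — is a rigidity statement that is far stronger than anything the paper proves or needs. The paper's Lemma~4.4 proves only that a null-homologous surface of genus $< g$ and bounded area in the block manifold bounds a $3$-chain of volume bounded independently of $n$; it makes no claim that the surface looks like fiber copies, and the argument (coarea formula plus the 2-D isoperimetric inequality applied slice by slice) is elementary by comparison. Your rigidity lemma would require a genuine stability/compactness argument for harmonic maps into thin product regions, and you correctly flag it as the main obstacle; the paper simply routes around it.

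Third, the invariant and the counting are different, and your version has an unjustified step. The paper's invariant is a single parity bit: whether the sweepout surface, at the moment it bisects $\mathrm{Vol}(L\cup R)$, has engulfed mostly $L$ or mostly $R$ (the $LR$- versus $RL$-sweepout dichotomy); the genus bound $2g-1 < 2g$ enters only through the isoperimetric lemma, which forbids a single bounded-area surface of that genus from simultaneously bisecting $\mathrm{Vol}(L)$ and $\mathrm{Vol}(L\cup R)$ when $n$ is large. Your proposal instead builds a $g$-tuple of signed intersection numbers and claims that one stabilization changes this tuple in at most one coordinate. That claim is asserted, not argued, and it is not clear why it should hold: after a stabilization the Heegaard surface can be isotoped freely, so any ``per-block'' count has to be shown invariant under all bounded-area homotopies of sweepouts, and the way a new handle interacts with the block decomposition is not local to a single block. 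The paper never needs such a statement — the role of each stabilization is simply to raise the ambient genus by one, pushing the total toward the threshold $2g$ at which the isoperimetric lemma (and hence the parity obstruction) fails. I'd recommend replacing the rigidity lemma with the weaker ``small area and genus $<g$ implies small enclosed volume'' statement, replacing the integer tuple with the volume-bisection parity, and letting the genus bound do the counting; that is where the factor of $g$ actually comes from.
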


We outline the idea in Section~\ref{outline}. In Section~\ref{construction} we describe
the construction of the 3-manifolds $M_g$.  We derive isoperimetric inequalities used in the proof
in Section~\ref{sec:isoperimetric} and discuss deformations of surfaces to harmonic maps
in Section~\ref{harmonic}. In Section~\ref{sweepouts} we show how a Heegaard splitting gives rise to a  family of Heegaard surfaces
that sweep out the 3-manifold.
The proof of Theorem~\ref{mainthm} is given in Section~\ref{mainproof}. Finally in Section~\ref{I-bundle}we show how a somewhat weaker result can be obtained for an easily constructed class of hyperbolic manifolds.

This result was presented at the American Institute of Mathematics Conference on {\em Triangulations, Heegaard Splittings, and Hyperbolic Geometry} held in December 2007. At this conference D. Bachman announced, using different methods, examples giving a lower bound of $g-4$ for the number of required stabilizations.

\section{Outline of the argument} \label{outline}

Let $M_\phi$ be a hyperbolic 3-manifold that fibers over $S^1$ with monodromy $\phi$ and let
 $\tilde M_\phi$ denote its infinite cyclic cover.
A pictorial representation of $M_\phi$  is given in Figure~\ref{fig:Mphi}. Cutting open $M_\phi$ along a fiber gives a fundamental domain $B$ of the $\ZZ$-action on the infinite cyclic cover, which we call a {\em block}.
Blocks are homeomorphic, but not isometric, to the product of a surface and an interval. They are foliated by fibers of $M_\phi$, which in $B$ we call {\em slices}.
By cutting open a cyclic cover of $M_\phi$ we obtain a hyperbolic 3-manifold with as many adjacent blocks as we wish.

\begin{figure}[htbp]  
\centering
\includegraphics[width=4in]{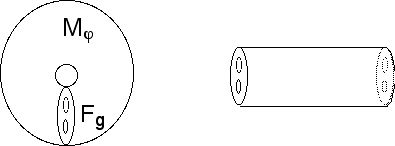} 
\caption{ $M_\phi$ and one block $B$, a fundamental domain of the $\ZZ$-action on its infinite cyclic cover. $B$  is homeomorphic to $F_g \times I$.}
\label{fig:Mphi}
\end{figure}

The manifold  $M_g$ used in our main result has pinched negative curvature. It contains two handlebodies $H_L$ and $H_R$ with fixed Riemannian metrics,  separated by a region homeomorphic to the product of a surface of genus $g$ with an interval. This intermediate piece is hyperbolic and isometric to $2n$ adjacent blocks.  The first $n$ blocks form a submanifold called $L$ and the next $n$ form a submanifold called $R$, as in Figure~\ref{fig:Mg}.  The value of $n$ can be chosen as large as desired without changing the geometry of $H_L$ and $H_R$. Details are in Section~\ref{construction}.

\begin{figure}[htbp] 
\centering
\includegraphics[width=4in]{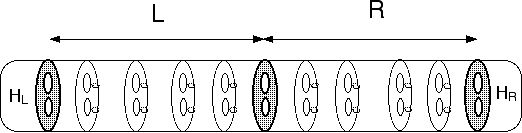} 
\caption{$M_2$, constructed with 10 blocks, 5 in each of $L$ and $R$.}
\label{fig:Mg}
\end{figure}

$M_g$ has two obvious Heegaard splittings $E_0 = (H_L \cup L, H_R \cup R, S)$ and $E_1 = (H_R \cup R,H_L \cup L, -S)$, where $S$ is a surface of genus $g$ separating $H_L \cup L$ and $H_R \cup R$ and $-S$ indicates $S$ with reversed orientation. We will show that these splittings are not  $k$-stably equivalent for $k <g$.

Let  $G_0$ be the Heegaard splitting obtained by
stabilizing $E_0$ $(g-1)$ times and $G_1 $ be the Heegaard splitting obtained by stabilizing  $E_1$ $(g-1)$ times. 
If $G_0$ and $G_1$ are equivalent, then there is
an isotopy $\{ I_s, ~ 0 \le   s  \le 1\}$ of $M_g$ that induces a diffeomorphism of $M_g$ carrying $G_0$ to  $G_1$,
A family of Heegaard splittings $\{ G_s = I_s(G_0),  ~ 0 \le s \le 1\} $ interpolates between 
$G_0$ and  $G_1$.

Associated to each Heegaard splitting $G_s$ is a  family of surfaces $F_{s,t}$ sweeping out $M_g$ from one core to the other.
In Section~\ref{harmonic} we show that such a family of surfaces can be deformed to a family of harmonic, or energy minimizing, maps.  These harmonic surfaces have area uniformly bounded by a constant $A_0$.  
This area bound restricts the way that surfaces can divide up the volume of $M_g$.  The surface cannot simultaneously split in half the volumes of $L$ and $L\cup R$.
In Section~\ref{mainproof} we use this to show that surfaces in such bounded area families cannot interpolate between $G_0$ and  $G_1$.  We conclude that $E_0$ and $E_1$ are not $k$-stably equivalent for $k <g$.

\noindent
{\bf Remark.}  The two Heegaard splittings $E_0$ and $E_1$ of $M_g$ do become equivalent after $g$ stabilizations. After adding $g$ trivial handles, a Heegaard surface can be isotoped to a surface formed by taking two genus $g$ slices connected by a thin tube. 
This can be isotoped so that one slice splits the volume of $L$ in half while the
second does the same to $R$, and
together they bisect the volume of $L \cup R$.
This type of surface arises in a family of bounded area surfaces
interpolating between genus $2g$ stabilizations
of $E_0$ and  $E_1$.

\section{Construction of $M_g$} \label{construction}

In this section we describe a certain Riemannian 3-manifold $M_g$ of Heegaard genus $g$,
based on the work of Namazi and Souto \cite{NamaziSouto}.  In Section~\ref{I-bundle} we give a simpler construction of a hyperbolic manifold that gives slightly weaker lower bounds on the number of stabilizations needed to make two Heegaard splittings equivalent.

We begin with a hyperbolic manifold $M_\phi$
that fibers over the circle with fiber a genus $g$ surface. 
Fix a fibration of $M_\phi$ with fibers $\{ S_t, ~ 0 \le t \le 1\}$ satisfying $\phi(S_0) = S_1$.
Define a {\em block}  $B$ to be the manifold obtained by cutting open $M_\phi$ along $ S_0$ and a {\em block manifold}  $B_n$ to be a union of $n$ blocks,
formed by placing end to end $n$ copies of $B$. 
The manifold $B_n$ is topologically, though not geometrically, the product of a
surface of genus $g$ and an interval. Its geometry can be obtained by cutting open the
$n$-fold cover of $M_\phi$ along a lift of $ S_0$. 
We call the fibers of  $B_n$ {\em slices} and label them by $S_t, ~ 0 \le t \le n$. We label the union of all fibers between $S_{t_1}$ and $S_{t_2}$ by $B[t_1,t_2]$.

The properties we desire for the manifold  $M_g$ are that it is a union of four pieces as in Figure~\ref{fig:Mg}. Two pieces are genus $g$ handlebodies, $H_L$ and $H_R$, and the other two $L$  and $R$ are each homeomorphic, though not isometric, to a product $F_g \times [-1,1]$, where $F_g$ is a surface of genus $g$.  $L$  and $R$ are each hyperbolic, and isometric to  a block manifold $B_n$. The sectional curvatures of $M_g$ are pinched  between  $-1 - \epsilon_0$ and $-1 + \epsilon_0$,
where $\epsilon_0  >  0$ can be chosen arbitrarily small. For our purpose we take $\epsilon_0 =1/2$.  

Namazi-Souto produced manifolds very similar to $M_g$ \cite{NamaziSouto}. While the manifolds they construct are suitable for the constructions we give, the argument is simpler if we modify the metric slightly so that the middle partt of $M_g$ is precisely, rather than approximately hyperbolic. We require the metric on $M_g$ to be isometric to a union of blocks on $L \cup R$.  This can be arranged by perturbing the manifold $M$ in Theorem~5.1 of \cite{NamaziSouto}, which has sectional curvatures  pinched  between  $-1 - \epsilon$ and $-1 + \epsilon$ where $\epsilon >0$ can be chosen arbitrarily small.  This manifold has a product region separating two handlbodies with geometry $\epsilon$ close (in the $C^2$-metric) to the geometry of a block. A small $C^2$-perturbation gives a new metric on $M$ with sectional curvatures between  $-3/2$ and $-1/2$ everywhere and exactly isometric to a block on an interval separating two handlebodies.  Now cut open the manifold along a slice in this hyperbolic block and insert a copy of $B_n$ to obtain $M_g$, with $n$ as large as desired. 
However large the choice of $n$, the geometry of $M_g$ is unchanged on the two complementary handlebodies of $B_n$, which we call $H_L$ and $H_R $.

\section{Isoperimetric inequalities} \label{sec:isoperimetric}
In this section we develop some isoperimetric inequalities for curves in surfaces and for surfaces in 3-manifolds.  
The following isoperimetric  inequality holds  for a curve in hyperbolic space $H^2$ \cite{Chavel}. A proof can be obtained by using symmetry to show that a round circle bounds at least as much area as any other curve, using no more length.  Explicit formulas for length and area in hyperbolic space then imply the inequality. The curve does not need to be embedded or connected.

\begin{lemma}  \label{H2curve} 
Let $c$ be a closed curve in $H^2$ with  length $L$. Then $c$ is the boundary of a disk $f: D^2 \to H^2$ of area $A$, with $A < L$.
\end{lemma}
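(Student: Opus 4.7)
The plan is to construct a bounding disk $f$ explicitly by geodesic coning and estimate its area using hyperbolic polar coordinates. This bypasses the symmetrization argument alluded to in the lemma statement and handles non-embedded and disconnected curves uniformly.

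First I would reduce to the connected case: a disconnected curve with components $c_i$ of total length $L = \sum L_i$ is handled by coning each $c_i$ separately and summing, since if each component bounds a disk of area $< L_i$, then the union bounds a (possibly pinched) disk of total area $< L$. So assume $c$ is a unit-speed loop $c : [0, L] \to H^2$ with $c(0) = c(L)$. Set $p = c(0)$ and define $f : D^2 \to H^2$ by viewing $D^2$ as $[0,1] \times [0,L]$ with the $\{0\} \times [0,L]$ edge collapsed to $p$: send $(t, s)$ to the point at fractional distance $t$ along the unique (by CAT$(-1)$) geodesic from $p$ to $c(s)$. Then $\partial f = c$.

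Next I would compute the area. In hyperbolic polar coordinates $(\rho, \theta)$ centered at $p$ the metric is $d\rho^2 + \sinh^2 \rho \, d\theta^2$ with area form $\sinh \rho \, d\rho \, d\theta$. Writing $c(s) = (\rho(s), \theta(s))$, the cone is $(t, s) \mapsto (t \rho(s), \theta(s))$, whose Jacobian yields
\[
A \le \int_0^L \int_0^1 \sinh(t\rho(s)) \, \rho(s) \, |\theta'(s)| \, dt \, ds = \int_0^L (\cosh \rho(s) - 1) \, |\theta'(s)| \, ds.
\]
The unit speed relation $(\rho')^2 + \sinh^2 \rho \, (\theta')^2 = 1$ forces $|\theta'(s)| \le 1/\sinh \rho(s)$, so the integrand is at most $\tanh(\rho(s)/2)$. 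Because $p$ lies on $c$ and the curve closes up, the distance from $p$ to $c(s)$ is bounded by the shorter of the two arc-length distances along $c$, hence $\rho(s) \le L/2$, and
\[
A \le \int_0^L \tanh(L/4) \, ds = L \tanh(L/4) < L.
\]

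The main obstacle is obtaining the \emph{strict} inequality $A < L$ rather than $A \le L$: the pointwise bound $\tanh(\rho(s)/2) < 1$ integrates only to $A \le L$, and strictness requires a uniform upper bound on $\rho(s)$ away from $\infty$. The elementary diameter estimate $\rho(s) \le L/2$, coming from closedness of $c$, supplies exactly the slack needed. Non-embedded curves are handled automatically since $f$ is allowed to fail to be injective; this is the advantage of the coning construction over a symmetrization-based proof, which would require extra care (via perturbation and region decomposition) to extend from embedded to general closed curves.
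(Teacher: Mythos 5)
Your proof is correct, and it takes a genuinely different route from the paper's. The paper's argument invokes the isoperimetric inequality in $H^2$ (a round circle encloses at least as much area as any other curve of the same length, by symmetrization) and then applies the explicit formulas $L = 2\pi\sinh r$, $A = 2\pi(\cosh r - 1)$ for a circle of radius $r$, noting $\cosh r - 1 < \sinh r$. Your argument instead cones geodesically from a point $p$ on the curve and computes the area of the cone directly in polar coordinates, obtaining the sharper bound $A \le L\tanh(L/4) < L$ after using the unit-speed estimate $|\theta'|\le 1/\sinh\rho$, the identity $(\cosh\rho-1)/\sinh\rho = \tanh(\rho/2)$, and the diameter bound $\rho(s)\le L/2$ that comes from placing the cone vertex on the curve. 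The trade-off: the paper's route is short given the isoperimetric inequality as a black box, but that inequality is usually stated for embedded curves and extending it cleanly to immersed or disconnected curves requires an additional approximation or current-theoretic step (which the paper elides with the remark that the curve ``does not need to be embedded or connected''). Your coning construction is fully self-contained and handles non-embedded and disconnected curves automatically, at the modest cost of the polar-coordinate computation; it also produces a slightly stronger quantitative bound, though that is not needed here. One small point worth making explicit: the integrand $(\cosh\rho(s)-1)|\theta'(s)|$ is bounded near $s=0, L$ even though $\theta'$ may blow up like $1/\rho$ there, precisely because the bound by $\tanh(\rho/2)$ tends to $0$; so the integral is well-defined despite the coordinate singularity of $\theta$ at $p$.
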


We now consider more general isoperimetric inequalities for curves in surfaces and surfaces in 3-manifolds.  In these settings we
consider the areas and volumes of 2-chains and 3-chains with boundary.  
We first give an isoperimetric inequality for curves in a compact family of Riemannian metrics on a surface.

\begin{lemma} \label{surfaceisoperimetric}
Let $(S,g_r)$ be a closed 2-dimensional surface with Riemannian metric $g_r, r \in U$, where $g_r$ is a family of metrics smoothly parameterized by a compact set
$U \subset \RR^n$.
There is a constant $K$  such that for any surface $S$ in the family $(S,g_r)$,
\begin{enumerate}
\item A null-homotopic curve  $c$ in $S$ is the boundary of a disk $f: D \to S$
with 
$$
 \mbox{Area}(f(D))  \le K \cdot \mbox{Length}(c) .
$$
\item A collection of curves   $\{ c = \cup c_i \}$, with each $c_i$ null-homotopic in $S$, is the boundary of a collection of disks $\{ f_i : D_i \to S \}$
with 
$$
\sum  \mbox{Area}(f_i(D_i))  \le K \cdot \mbox{Length}(c) .
$$
\item A null-homologous curve $c$ on $S$
bounds a 2-chain $X_2$ in $S$ with
$$
 \mbox{Area}(X_2)  \le K \cdot \mbox{Length}(c) .
$$
\end{enumerate}
\end{lemma}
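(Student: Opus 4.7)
The strategy is to use compactness of $U$ to reduce to uniform geometric estimates on a single surface, and then combine two tools: the hyperbolic isoperimetric inequality of Lemma~\ref{H2curve}, applied on the universal cover, for null-homotopic curves, and a direct area argument using the total area of $(S, g_r)$ for the null-homologous case. Compactness of $U$ and smooth dependence of $r \mapsto g_r$ yield uniform constants: a lower bound $\iota_0 > 0$ on the injectivity radius of $(S, g_r)$, an upper bound $A_0$ on the total area, and a bi-Lipschitz constant $\lambda \ge 1$ between each $g_r$ and a fixed reference hyperbolic metric $g_0$ on $S$.

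For (1), let $c$ be a null-homotopic curve of length $L$ in $(S, g_r)$; its $g_0$-length is at most $\lambda L$. Lift $c$ to a closed curve $\tilde c$ in the universal cover $\widetilde S = H^2$ and apply Lemma~\ref{H2curve} to obtain an immersed disk bounded by $\tilde c$ of $g_0$-area at most $\lambda L$. Projecting the disk down to $(S, g_0)$ via the covering map does not increase area, and switching back from $g_0$ to $g_r$ costs at worst a further factor $\lambda^2$ in area, so the resulting disk in $(S, g_r)$ has area at most $\lambda^3 L$; take $K = \lambda^3$. Part (2) is then immediate by applying (1) to each component $c_i$ and summing, since total length and total area are additive.

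For (3), split on the size of $L$. If $L \le \iota_0$, then $c$ is contained in an embedded metric ball of radius $\iota_0$, hence null-homotopic, and (1) applies. If $L > \iota_0$, perturb $c$ to transverse general position, let $R_1, \ldots, R_m$ be the regions of $S \setminus c$ with areas $a_i$, and choose integer multiplicities $n_i$ such that $X_2 = \sum n_i R_i$ satisfies $\partial X_2 = c$; such $n_i$ exist by the null-homologous hypothesis and are unique modulo a common integer shift, since $H_2(S; \mathbb{Z}) = \mathbb{Z}\cdot [S]$. Centering the $n_i$ about zero, the mass $\sum |n_i| a_i$ is at most $\tfrac12(\max_i n_i - \min_i n_i) A_0$, and the range of the $n_i$ is the maximum signed intersection number of $c$ with a transverse path in $S$, which by an averaging argument over a uniformly bounded family of test paths grows linearly in $L$ with a constant depending only on $(S, g_0)$. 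Combining the two regimes and enlarging $K$ as needed completes (3). The main obstacle is precisely this last estimate in (3): verifying that the integer multiplicities defining $X_2$ can be normalized so that its mass grows linearly in $L$, uniformly over the family $U$, no matter how intricate the self-intersection combinatorics of $c$.
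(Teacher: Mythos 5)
Your proof of parts (1) and (2) is essentially the paper's: lift a null-homotopic curve to $H^2$, apply Lemma~\ref{H2curve}, push back down, and pay a factor $\lambda^3$ for the bi-Lipschitz comparison between $g_r$ and a fixed reference hyperbolic metric, with $\lambda$ uniform by compactness of $U$. The paper uses exactly this argument.

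For part (3), you and the paper both split on whether $\mbox{Length}(c)$ is below or above a uniform injectivity-radius bound, and in the short regime both reduce to (1)/(2). The divergence is in the long regime, and here your argument has a genuine gap which you yourself flag. You keep $c$ in general position with self-intersections, take the regions $R_i$ of $S\setminus c$, and try to bound the spread of the integer multiplicities $n_i$ linearly in $\mbox{Length}(c)$ by an unspecified ``averaging argument over a uniformly bounded family of test paths.'' That estimate is the entire content of the hard case, and as written it is not established: you would need to produce a uniform family of transversals and show that the signed intersection number of $c$ with any of them is $O(\mbox{Length}(c))$ with a constant depending only on the compact family of metrics. Nothing you wrote supplies this, so the proof does not close.

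The paper avoids the multiplicity issue entirely by a two-step simplification you did not make. First, it perturbs and performs orientation-respecting cut-and-paste at each transverse double point to replace $c$ by a \emph{disjoint embedded} multicurve homologous to $c$, the homology between the two having arbitrarily small area (so it can be absorbed into the constant). Second, it works with $\ZZ_2$-coefficients, which the paper explicitly adopts for these chain computations in the paragraph following the lemma. For an embedded mod-2 null-homologous multicurve, the complement is $2$-colorable, either color is a mod-2 $2$-chain spanning $c$, and its mass is at most $\mbox{Area}(S)$. Since $\mbox{Length}(c)\ge\epsilon$ in this regime, one immediately gets the bound with $K \ge \mbox{Area}(S)/\epsilon$. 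No control on integer multiplicities is needed. If you insist on integer coefficients you would still want to cut-and-paste to the embedded case first (where adjacent multiplicities differ by $\pm1$), and even then the mass is bounded by $\mbox{Area}(S)$ only mod 2; with $\ZZ$-coefficients parallel copies of a separating curve can force a range of multiplicities proportional to their number, so the correct bound is still linear in length but requires a further argument. The $\ZZ_2$ reduction is both what the paper uses downstream and what makes the proof short, so you should adopt it.
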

\bproof
First consider the case where $c$ is a null-homotopic curve. For a hyperbolic surface $S$, $c$ lifts to $H^2$ where it bounds a mapped in disk of area less than  
length($c$) by Lemma~\ref{H2curve}.
The surface $(S,g_r)$ is conformally equivalent to a hyperbolic surface,
so there is a hyperbolic metric $(S,g)$ and a diffeomorphism from
$(S,g)$ to $(S,g_r)$ that stretches or compresses any vector in the tangent space
of $(S,g)$ by a factor of at most $\lambda$. A single choice of $\lambda$ can be made for all the surfaces in the compact family $(S,g_r)$. Lengths of curves measured
in $(S,g)$ differ from lengths in $(S,g_r)$ by a factor of at most
$\lambda$ and the area of a region in $(S,g)$  differs from the area
in $(S,h_r)$ by a factor of at most $\lambda^2$. 
By comparing the area of a spanning disk 
and $\mbox{Length}(c)$ in  $(S,g)$, we see that in  $(S,g_r)$
$$
\mbox{Area}(f(D))  \le \lambda^3 \cdot \mbox{Length}(c) .
$$
The same argument applies for a  collection of null-homotopic components, 

There is a constant $\epsilon$ that gives a lower bound for the injectivity radius for the family of metrics $h_r$.  If the length of $c$ is less than $\epsilon$ than each of the components of $c$ is null-homotopic and the above bound holds for the area of a collection of spanning disks.

Now consider a general null-homologous curve $c$ of length greater than $\epsilon$. A small perturbation and  orientation preserving cut and paste transforms $c$ into a union of disjoint embedded components that is homologous to $c$. A homology between this embedded curve and $c$ has arbitrarily small area, so we can assume that  $c$ is a collection of embedded disjoint curves.
Each side of $c$ gives a 2-chain with boundary $c$, and so $ \mbox{Area}(S)$ gives an upper bound to the area of a 2-chain $X_2$ spanning $c$. Therefore
$$
\mbox{Area}(X_2)  \le   \mbox{Area}(S) \le (\mbox{Area}(S) / \epsilon ) \cdot \mbox{Length}(c) .
$$
The Lemma follows with $K= \lambda^3  + \mbox{Area}(S)/ \epsilon$.
\qed

To simplify the next calculations, it is convenient to work with chains having $\ZZ_2$-coefficients. With this choice we do not need to pay attention to orientations, signs, or multiplicity other than even or odd.  A 2-chain spanning a general position curve in a surface is obtained by  2-coloring the complement of the curve, dividing the surface into black and white regions separated by the curve. The area of a spanning 2-chain is the area of  either the white or the black subsurface, and  has a value between zero and the area of the surface. The two choices of  spanning 2-chain have area that sums to the area of the entire surface.  A similar statement applies for 3-chains spanning a surface in a 3-manifold.

We now analyze the geometry of surfaces in block manifolds.
The metrics on the slices $S_t$ of a block manifold, up to isometry, are parameterized by a closed interval, so Lemma~\ref{surfaceisoperimetric} applies. Therefore there is a constant $K_1$ independent of $n$, such that  a null-homologous curve $c$ on a slice $S_t$
bounds a 2-chain $X_2$ in that slice.with
$$
 \mbox{Area}(X_2)  \le K_1 \cdot \mbox{Length}(c) .
$$ 

Each slice is an incompressible surface in $B_n$ and therefore
with its induced metric has injectivity radius greater or equal to that of $M_\phi$.
Set $\epsilon$ to be the constant
$$
\epsilon = \min_{t \in [0,1]}  \{   \mbox{injectivity radius}(M_\phi) , \mbox{Area}(S_t) / 10 K_1 \}
$$

Now consider a proper separating surface $ (F, \partial F)$ in $(B_n, \partial B_n)$.
For each $t$, the complement of  $F \cap S_t $ in the slice $S_t$ can be 2-colored.
In slices $S_t$ where $\mbox{Length}(F \cap S_{t}) \le \epsilon$, each curve of
$F \cap S_{t}$ is null-homotopic.
Lemma~\ref{surfaceisoperimetric} implies that there is a collection of disks spanning
the components of $F \cap S_{t}$ whose total area is less than $  \mbox{Area}(S_t) / 10$.
Call the subsurface of $S_t$ missing these disks the {\em large side} of $F \cap S_t$. Alternately, this subsurface is the largest of  the connected complementary components of $F \cap S_t$ in $S_t$.

\begin{lemma}  \label{gbound}
Suppose $(F, \partial F)$ is a proper separating surface   in $(B_n, \partial B_n)$  and intersects slices $S_{t_1}, S_{t_2}$ transversely with  $ \mbox{Length}(F \cap S_{t_1}) \le \epsilon$ and  $ \mbox{Length}(F \cap S_{t_2}) \le \epsilon$.
If the genus of $F$ is less than $g$ then an arc $\alpha$ from  the large side of $F \cap S_{t_1}$ to the large side of $F \cap S_{t_2}$ intersects $F$ algebraically zero times.
\end{lemma}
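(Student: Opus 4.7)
The plan is to exploit the topological product structure $B_n \cong F_g \times [0,n]$ given by the fibration, so that $B[t_1,t_2] \cong F_g \times [t_1,t_2]$, and then deduce the conclusion from a degree calculation in the slab. Set $F^* = F \cap B[t_1,t_2]$, $\tau_i = F \cap S_{t_i}$, and $P_i = S_{t_i} \setminus L_i$, the union of the small complementary components of $\tau_i$ in $S_{t_i}$. Because the curves of $\tau_i$ are null-homotopic and pairwise disjoint, each component of $P_i$ is contained in one of the pairwise disjoint disks bounded by the outermost curves of $\tau_i$.

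I would then form a closed surface $\hat F \subset B[t_1,t_2]$ by compressing $F$ along all of the embedded disks in $S_{t_1}$ and $S_{t_2}$ bounded by curves of $\tau_1 \cup \tau_2$, processing innermost curves first and working outward, and restricting the result to $B[t_1,t_2]$. Each individual compression either drops the genus of a component by one (non-separating case) or splits it into two pieces of strictly smaller genera (separating case), so every component of the compressed surface has genus at most $g(F) < g$. Since all the compression disks lie in the boundary slices of $B[t_1,t_2]$, the resulting surface in $B[t_1,t_2]$ is isotopic to $F^*$ with each boundary curve capped off by a parallel disk pushed slightly into the interior, and each component of $\hat F$ has genus less than $g$.

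Each component of $\hat F$ is then a closed embedded orientable surface of genus $h<g$ in $B[t_1,t_2] \cong F_g \times I$. Composing with the topological projection $\pi : B[t_1,t_2] \to F_g$ gives a map of closed oriented surfaces, and the Kneser--Riemann--Hurwitz bound $h \ge 1 + |k|(g-1)$ for a degree-$k$ map with $|k|\ge 1$ and $g \ge 2$ forces the degree $k$ on each component to vanish. Hence $[\hat F] = 0 \in H_2(B[t_1,t_2];\ZZ)$, and for every regular value $p \in F_g$ of $\pi|_{\hat F}$ the vertical arc $\gamma_p := \{p\} \times [t_1,t_2]$ satisfies $\gamma_p \cdot \hat F = 0$.

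It remains to pick $p_0$ so that $\gamma_{p_0}$ connects $L_1$ to $L_2$. Under the product identification, view $L_1, L_2$ as subsurfaces of $F_g$; if $L_1 \cap L_2$ were empty then the connected subsurface $L_1$ would sit inside $P_2$, hence inside one of the pairwise disjoint outermost disks bounded by $\tau_2$, which is impossible since $L_1$ is a connected subsurface of genus $g \ge 2$. So $L_1 \cap L_2$ is a non-empty open subset of $F_g$, and I choose $p_0$ in it that is also a regular value of $\pi|_{\hat F}$. The capping disks constituting $\hat F \setminus F^*$ are concentrated near $\tau_1 \cup \tau_2$, and since $p_0$ avoids $P_1 \cup P_2$ the arc $\gamma_{p_0}$ meets only $F$ itself and not the capping disks, so $\gamma_{p_0} \cdot F = \gamma_{p_0} \cdot \hat F = 0$. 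Because $F$ separates the orientable manifold $B_n$, the algebraic intersection of $F$ with any arc depends only on which sides of $F$ its endpoints lie in; therefore the endpoints of $\gamma_{p_0}$, and hence those of every arc $\alpha$ from the large side of $F \cap S_{t_1}$ to the large side of $F \cap S_{t_2}$, lie on the same side of $F$, giving $\alpha \cdot F = 0$. The main obstacle will be the genus bookkeeping in the compression step: one must control every component produced by successive splittings and verify that the Riemann--Hurwitz degree bound applies uniformly across those components.
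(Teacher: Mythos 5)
Your argument is correct and follows essentially the same route as the paper's short proof: cap off (equivalently, compress and then restrict) $F$ between the two slices to obtain a closed surface of genus less than $g$ inside the product $B[t_1,t_2]\cong F_g\times I$, observe that such a surface is null-homologous there, and conclude that the large sides of $F\cap S_{t_1}$ and $F\cap S_{t_2}$ lie on the same side of $F$, so every connecting arc meets $F$ algebraically zero times. The only difference is one of explicitness: where the paper simply asserts that a genus $<g$ closed surface in $F_g\times I$ is null-homologous and that ``the disks do not meet $\alpha$,'' you unpack the first via the Kneser degree inequality applied to the projection and the second by exhibiting a concrete vertical arc through a point of $L_1\cap L_2$ (whose nonemptiness you correctly deduce from the fact that the small sides sit in disjoint disks), then invoking separation of $B_n$ by $F$ to transfer the conclusion to an arbitrary arc $\alpha$ -- both reasonable elaborations of what the paper leaves implicit.
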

\begin{proof}
Cap off the curves in $F \cap S_{t_1}, F \cap S_{t_2}$, each of which is shorter than the injectivity radius, by adding disks missing the large side of each surface. These disks do not intersect the arc $\alpha$. The resulting closed surface in the region between  $S_{t_1} $ and $S_{t_2}$, has genus less than $g$. Such a surface is null-homologous in the  region between these two slices, which is homeomorphic to the product of a genus $g$ surface and an interval, and therefore cannot algebraically separate them.
\end{proof}

The next lemma gives a 3-dimensional isoperimetric inequality for surfaces in block manifolds.
Note that Lemma~\ref{3disoperimetric} is false if the genus $k \ge g$.  
A  slice can of genus $g$ can split the volume of $B_n$ in half.
 
\begin{lemma} \label{3disoperimetric}
Let  $ (F, \partial F) \subset  (B_n, \partial B_n)$ a null-homologous proper surface in a block manifold,  having genus less than $g$ and area less than some constant $A_0$. 
Then there is a constant $B_0$, independent of $n$, such that  $F$ bounds
a 3-chain $X$ in $B_n $ with $\mbox{Volume}(X) < B_0$.
For $n$ sufficiently large,  
$$
\mbox{Volume}(X)  < (.1) \mbox{Volume}(B_n).
$$
\end{lemma}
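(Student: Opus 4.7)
The strategy is to identify a canonical $\ZZ_2$-side of $F$ in $B_n$ as the 3-chain $X$ and bound its volume by integrating cross-sectional areas slice by slice, exploiting Lemmas~\ref{surfaceisoperimetric} and~\ref{gbound}.

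Since $B_n$ consists of $n$ isometric copies of a single block $B$, the slicing parameter $t : B_n \to [0,n]$ has gradient uniformly bounded by a constant $C_0$ depending only on $B$. The co-area formula applied to $t|_F$ then yields
$$
\int_0^n \mbox{Length}(F \cap S_t)\, dt \;\le\; C_0 \cdot \mbox{Area}(F) \;\le\; C_0 A_0.
$$
Call a regular value $t$ of $t|_F$ \emph{good} if $\mbox{Length}(F \cap S_t) \le \epsilon$, and \emph{bad} otherwise. Since $\mbox{Length}>\epsilon$ on the bad set, integrating gives that the measure of bad values is at most $C_0 A_0/\epsilon$, a quantity independent of $n$. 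By Sard's theorem the set of $t$ where $F$ fails to be transverse to $S_t$ has measure zero and may be ignored.

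Because $F$ is null-homologous, it $\ZZ_2$-separates $B_n$ into two regions, which I label $X$ and $X^c$ so that, at a fixed good slice $t_0$, the large side of $F \cap S_{t_0}$ lies in $X^c$. For any other good $t$, Lemma~\ref{gbound} applied to the pair $(t_0, t)$ shows an arc from the large side of $F \cap S_{t_0}$ to the large side of $F \cap S_t$ meets $F$ algebraically zero times. Since $F$ separates, this forces the large side of $F \cap S_t$ into $X^c$ as well, so $X \cap S_t$ is contained in the small side of $F \cap S_t$. By the construction of the small side from Lemma~\ref{surfaceisoperimetric}(2),
$$
\mbox{Area}(X \cap S_t) \;\le\; K_1 \cdot \mbox{Length}(F \cap S_t) \quad \mbox{for every good } t.
$$
Integrating over $[0,n]$ via Fubini and splitting into good/bad contributions gives
$$
\mbox{Volume}(X) \;\le\; K_1 \int_0^n \mbox{Length}(F \cap S_t)\, dt + A_{\max} \cdot \frac{C_0 A_0}{\epsilon} \;\le\; K_1 C_0 A_0 + \frac{A_{\max} C_0 A_0}{\epsilon} =: B_0,
$$
where $A_{\max}$ is the maximum area of a slice of $B$. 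This constant is independent of $n$, proving the first claim. For the second, $\mbox{Volume}(B_n) = n \cdot \mbox{Volume}(B)$ grows linearly in $n$, so $B_0 < 0.1 \cdot \mbox{Volume}(B_n)$ as soon as $n$ is sufficiently large.

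The main obstacle is the global consistency of sides: Lemma~\ref{gbound} only controls pairs of good slices, so the argument hinges on propagating the $\ZZ_2$-coloring chosen at a single reference slice $t_0$ correctly to every other good slice. The genus hypothesis $\mbox{genus}(F) < g$ is used in exactly this step and must apply uniformly across all good $t$. Once this global identification is secured, the remaining work is a routine co-area accounting in which the good part of the integrand is controlled by total length (hence by $\mbox{Area}(F)$) and the bad part is controlled by the bounded measure of bad slices.
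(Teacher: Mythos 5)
Your proof is correct and takes essentially the same approach as the paper: both use the coarea formula, the two-dimensional isoperimetric bound (Lemma~\ref{surfaceisoperimetric}), and Lemma~\ref{gbound} to propagate a consistent choice of ``small side'' across slices with short intersection, then integrate cross-sectional areas. One small imprecision: the step ``Integrating over $[0,n]$ via Fubini \dots gives $\mbox{Volume}(X) \le \dots$'' is not literally Fubini, since $\mbox{Volume}(X) \ne \int_0^n \mbox{Area}(X \cap S_t)\,dt$ in general; you need a second application of the coarea formula together with a lower bound on $|\nabla t|$ (which holds since $t$ is a submersion of the compact block), introducing a further multiplicative constant exactly as the paper's constant $C_1$ does, but this does not affect the conclusion.
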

\begin{proof}
Since $F$ is null-homologous, we can 2-color its complement.
This gives two 3-chains with boundary $F$, and we take $X$ to be the one of smaller volume.
The coarea formula \cite{Federer} implies that the volume of $X$ is comparable to the integral of its cross-sectional areas. Namely there is a constant $C_1$ such that
\begin{equation}
\mbox{Volume}(X)  / C_1 < \int_0^n \mbox{Area}(X \cap S_t) ~ dt < C_1 \cdot \mbox{Volume}(X).
 \label{eqn:coarea1}
\end{equation}
The coarea formula also gives an inequality between the area of $F$ in $B_n$ and the lengths of its intersections with slices. There is a constant $C_2$ such that
\begin{equation}
\int_0^n \mbox{Length}(F \cap S_t) ~ dt < C_2  \cdot \mbox{Area}(F).
 \label{eqn:coarea2}
\end{equation}
The constants $C_1$ and $C_2$ are determined by the geometry of a single block $B$, and do not depend on $n$.

We estimate the volume of $X$ by integrating the area of its cross sections.  To do so, we need to consistently specify for each slice $S_t$ 
which of the two complementary subsurfaces of $F \cap S_t$ lies in $X$.
If $ \mbox{Length}(F \cap S_t) < \epsilon$ then we take the side of $F$ containing the complementary subsurface of smaller area in $S_t$.  This subsurface has area less than $ K_1\epsilon$ by Lemma~\ref{surfaceisoperimetric} and misses the large side of $F \cap S_t$.  
Each such subsurface lies on the same side of $F$ (mod  2) by Lemma~\ref{gbound}.  For slices in which $ \mbox{Length}(F \cap S_t) \ge \epsilon$ the construction given in Lemma~\ref{surfaceisoperimetric} gives a bound 
$ K_1 \cdot \mbox{Length}(F \cap S_t)$ for the area of both complementary sides in $S_t$ of $ F \cap S_t$, and in particular for the side lying in $X$. By Equation~(\ref{eqn:coarea1}), $X$ has volume bounded above by 
\begin{eqnarray*}
  \mbox{Volume}(X) < C_1  \int_0^n  \mbox{Area}(X \cap S_t) ~ dt 
  < C_1 K_1  \int_0^n   \mbox{Length}(F \cap S_t)  ~ dt 
   \\ <  C_1 K_1 C_2 A_0 = B_0.
\end{eqnarray*}
This bound is independent of $n$, so for $n$ sufficiently large it is less than $\mbox{Volume}(B_n) /10$.
\end{proof}

Finally we  give a 3-dimensional isoperimetric inequality that implies  small area surfaces in a Riemannian manifold bound regions of small volume. The argument follows the line of an argument of Meeks \cite{Meeks}. The result is valid for both integer and $\ZZ_2$ coefficients, though we need only the latter.
   
\begin{lemma} \label{3dgeneral}
Let $M$ be a compact, connected Riemannian 3-manifold and $ F $ a proper surface in $M$, not necessarily embedded or connected.
Given $\epsilon_0>0 $ there is a $\delta_0 >0 $ such that if   $\mbox{Area}(F)  < \delta_0$ then
$F$ bounds a 3-chain $X$ in $M$ with $\mbox{Volume}(X)  <  \epsilon_0$.
\end{lemma}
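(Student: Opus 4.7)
The plan is to combine two ingredients: a homological lower bound on area that forces $F$ to be null-homologous, and a BV-compactness argument that turns vanishing boundary area into vanishing volume.

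First I would establish that every mod-2 two-cycle in $M$ representing a nonzero class in $H_2(M,\partial M;\ZZ_2)$ has area bounded below by some positive constant $a_0$. Since $M$ is compact, this homology group is finite, and by the Federer--Fleming compactness theorem each nonzero class has a mass-minimizing representative whose mass is strictly positive; taking the minimum over the finitely many nonzero classes gives $a_0>0$. Thus as soon as $\mbox{Area}(F)<a_0$ the class of $F$ vanishes, and 2-coloring $M$ by the parity of the local degree of $F$ partitions it into two complementary 3-chains whose common boundary is $F$ mod 2; either chain serves as a valid candidate for $X$. The perimeter of each colored region, viewed as a set of finite perimeter, is bounded above by $\mbox{Area}(F)$, with equality when $F$ is embedded and strict inequality where $F$ has self-intersections of odd multiplicity.

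For the volume bound I would argue by contradiction. Suppose some $\epsilon_0>0$ admits no $\delta_0$: then there is a sequence of surfaces $F_n$ with $\mbox{Area}(F_n)\to 0$ such that every 3-chain bounded by $F_n$ has volume at least $\epsilon_0$. Let $X_n$ be the smaller of the two complementary regions just constructed, so $\epsilon_0\le\mbox{Vol}(X_n)\le\mbox{Vol}(M)/2$. The indicator functions $\chi_{X_n}$ form a bounded sequence in $BV(M)$, with total variation $P(X_n)\le\mbox{Area}(F_n)\to 0$. BV-compactness supplies a subsequence converging in $L^1(M)$ to the indicator $\chi_Y$ of some measurable set $Y\subset M$, and lower semicontinuity of perimeter yields $P(Y)\le\liminf P(X_n)=0$.

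A set of zero perimeter in a connected Riemannian manifold must be either null or full modulo sets of measure zero, because a BV function of vanishing total variation is almost everywhere constant. Hence $\mbox{Vol}(Y)\in\{0,\mbox{Vol}(M)\}$, while $L^1$-convergence of the indicators forces $\mbox{Vol}(Y)=\lim\mbox{Vol}(X_n)\in[\epsilon_0,\mbox{Vol}(M)/2]$, the desired contradiction. The step I expect to be the main obstacle is the BV setup when $F$ is only an immersed or mapped surface: one must verify that the mod-2 coloring of the complement of $F$ yields an honest set of finite perimeter, and that its perimeter is controlled by $\mbox{Area}(F)$ rather than by the larger area seen on the parameter domain, while keeping track of the boundary contribution from $\partial F\subset\partial M$. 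These features are standard in the theory of integral currents mod 2, but writing them out carefully is where the real work lies.
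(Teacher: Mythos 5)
Your proposal is correct and takes a genuinely different route from the paper. The paper's argument, following Meeks, is local and constructive: it fixes a cell decomposition of $M$ with a single $3$-cell $R$, perturbs $\partial R$ to meet $F$ in curves of small total length (coarea), surgers $F$ along those curves using the isoperimetric inequality on the $2$-sphere $\partial R$, applies the Euclidean isoperimetric inequality $V\le C\,A^{3/2}$ (transported to the ball by a metric-distortion constant $\lambda$) to the surgered surface, and recovers a $3$-chain $X$ with $\partial X=F$ mod $2$ by cancelling the cap-off contributions on $\partial R$. Your argument instead splits the statement into two pieces and treats both by compactness: first, Federer--Fleming mass minimization over the finitely many nonzero classes of $H_2(M,\partial M;\ZZ_2)$ gives a uniform positive lower area bound $a_0$, so $F$ is forced to be null-homologous and the mod-$2$ coloring is available; second, a $BV$ compactness and lower-semicontinuity argument, together with the fact that a set of zero relative perimeter in a connected manifold is null or full, yields the volume bound by contradiction. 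Each approach has something to recommend it. The paper's surgery argument is closer to being quantitative (one could, in principle, extract an explicit modulus of continuity relating $\delta_0$ and $\epsilon_0$ from the ball isoperimetric inequality and the coarea estimate, though as written the dependence through $\delta_1$ is also left soft), and it produces the null-homologousness of $F$ and the small chain in one pass. Your argument is structurally cleaner and avoids the surgery bookkeeping, at the cost of being purely existential; and the fact that small-mass $2$-cycles are null-homologous, which you get from Federer--Fleming, is itself proved by a deformation-theorem argument that is morally the same cell-skeleton pushing the paper carries out by hand. You correctly flag the two technical points that would need to be written out: that the mod-$2$ coloring of the complement of an immersed $F$ yields an honest set of finite perimeter with relative perimeter controlled by the area of the map rather than of its multiply-covered image, and that the relative perimeter in $\operatorname{int}M$ (not the full perimeter including $\partial M$) is the right quantity for the compactness and lower-semicontinuity steps. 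These are standard in the theory of flat chains mod $2$ but are exactly where the careful work lies, as you note.
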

\begin{proof}
By cut and paste we can reduce to the case where $F$ is embedded, though not necessarily connected.
The result is true for a surface in the unit ball in $\RR^3$, where we have the inequality
$$
V  \le    \frac{ (A)^{3/2} }{ 6 \sqrt{\pi} }.
$$
for the area of a surface $A$ and the volume  $V$ of the region it bounds.

For a fixed Riemannian metric on a ball, there is a bound $\lambda$ to the maximum stretch or compression of the length of a vector.  This implies a bound on the volume increase of  a region of $\lambda^3$ and a bound on the change in the area of a surface of $\lambda^2$. It follows that
\begin{equation}
V \le  C \cdot  A^{3/2}      \label{ballisop}
\end{equation}
for some constant $C$ depending only on the metric.

Now take a cell decomposition of $M$ with a single 3-cell $R$.  
By the coarea formula and the area bound for $F$, for  sufficiently small $\delta_0$ the 2-complex forming the boundary of this 3-cell $\partial R$ can be perturbed so that the length of its intersection
with $F$ is arbitrarily short.  The isoperimetric inequality for the boundary 2-sphere of $R$ allows us to
surger $F$ along short intersection curves with the boundary, while keeping its total area
less than a  constant $\delta_1$ satisfying $\lim_{\delta_0 \to 0} \delta_1 = 0$.  The surgered surface lies in a Riemannian 3-ball in which we can apply Equation~\ref{ballisop}.  The resulting small volume region in a ball gives a region $X$ in $M$ with $\partial X = F$, since
$X$  has canceling boundary (mod 2) on $\partial R$.  Furthermore the volume of $X$ approaches 0 as  $\delta_0 \to 0.$  In particular we can arrange that   $\mbox{Volume}(X)  <  \epsilon_0$ by choosing $\delta_0$ sufficiently small.
\end{proof}

\section{Harmonic and bounded area maps} \label{harmonic}

Eells and Sampson showed that a map from a Riemannian surface $F$ with metric $h$
to a negatively curved manifold $M$ is homotopic to a harmonic map
\cite{EellsSampson}. This harmonic map is unique unless its image is a point or a closed geodesic \cite{Hartman}, cases that we will not need to consider.  
It is obtained by deforming an initial map of a surface with a given Riemannian metric (or conformal structure) in the direction of fastest decrease for the energy, called the tension field .
The resulting harmonic map depends smoothly on the metrics of both the domain and the image, as shown in Theorem~3.1 of \cite{EellsLemaire}. See also \cite{Sampson}.  A homotopy class of maps from a surface to a 3-manifold is {\em elementary}  if it induces a homomorphism of fundamental groups whose image is trivial or cyclic.

\begin{lemma} \label{harmonicfamily}
Let $\{ g_u: u \in U \}$ be a family of metrics on a closed surface, parametrized by some compact set $U \subset \RR^n$ and let $M$ be a closed manifold with negative sectional curvature.
Then a smooth family of non-elementary maps $\{ f_u: (F, g_u)  \to M \}$ is
deformable to a smooth family of harmonic maps $\{ h_u: (F, g_u)  \to M \}$.
\end{lemma}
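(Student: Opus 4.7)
The plan is to first produce, for each fixed $u\in U$, the harmonic representative $h_u$ of $f_u$ by running the Eells--Sampson heat flow, and then to promote the pointwise family $\{h_u\}$ to a smooth family in $u$ by invoking the smooth-dependence theorem cited as Theorem~3.1 of \cite{EellsLemaire}. The deformation from $f_u$ to $h_u$ is then given by the heat flow itself, after a time reparametrization that compactifies $[0,\infty)$ to $[0,1]$.

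For a fixed $u$ the non-elementary hypothesis says that $(f_u)_*\pi_1(F)$ is neither trivial nor cyclic, so $f_u$ is not homotopic to a constant map or to a cover of a closed geodesic. The heat flow
\[
\partial_s F_u(\cdot,s) = \tau(F_u(\cdot,s)), \qquad F_u(\cdot,0)=f_u,
\]
with $F_u(\cdot,s):(F,g_u)\to M$, exists for all $s\ge 0$ by Eells--Sampson: negative sectional curvature of $M$ yields a Bochner identity with favorable sign and hence uniform $C^1$ bounds on $F_u(\cdot,s)$. As $s\to\infty$ the flow converges in $C^\infty$ to a harmonic map $h_u$ homotopic to $f_u$, and Hartman's theorem together with the non-elementary hypothesis makes $h_u$ the unique harmonic map in its homotopy class.

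To obtain smoothness of $u\mapsto h_u$, I apply the Eells--Lemaire result: near any non-elementary harmonic map into a negatively curved target, the harmonic-map equation defines a smooth section of the bundle of maps over the parameter space of domain metrics and target metrics (the latter fixed here). Since $g_u$ depends smoothly on $u$, this gives an open neighbourhood of each $u_0\in U$ on which $u\mapsto h_u$ is smooth. Uniqueness of $h_u$ forces these local smooth families to agree on overlaps, and compactness of $U$ patches them into a single smooth family $\{h_u\}$.

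Finally, I pick a smooth increasing diffeomorphism $\sigma:[0,1)\to[0,\infty)$ with $\sigma(0)=0$ and set $H_u(\cdot,s)=F_u(\cdot,\sigma(s))$ for $s\in[0,1)$, extended by $H_u(\cdot,1)=h_u$. The $C^\infty$ convergence of the heat flow together with its smooth dependence on the initial data $(f_u,g_u)$ makes $H_u$ a smooth deformation from $f_u$ to $h_u$. The main obstacle is the smooth-in-$u$ step: pointwise the heat flow only provides continuous dependence on $u$, so genuine smoothness of the limit $h_u$ requires an implicit function theorem for the tension-field operator in which invertibility of the linearization is controlled by the non-elementary and negative curvature hypotheses; this is precisely the content of Eells--Lemaire's theorem and is where the argument hinges.
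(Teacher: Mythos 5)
Your proposal follows the same route the paper sketches in the paragraph preceding the lemma: existence of the harmonic representative via the Eells--Sampson heat flow in negative curvature, uniqueness from Hartman's theorem under the non-elementary hypothesis, smooth dependence on the domain metric from Theorem~3.1 of Eells--Lemaire, and the deformation supplied by the (reparametrized) flow itself. The only addition you make is spelling out the patching of local smooth families via uniqueness over the compact parameter set, which is implicit in the paper's citation of Eells--Lemaire; the two arguments are essentially identical.
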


Since harmonic maps satisfy a maximal principle, they are more negatively curved then the ambient manifold wherever they are immersed \cite{Sampson}. The Gauss-Bonnet theorem then implies an area bound proportional to the genus. The immersion assumption can be removed by an approximation of a general map by immersions. A detailed argument is found in Theorem~3.2 of \cite{Minsky:1992}.

\begin{lemma} \label{harmonicarea}
A harmonic map $f:F \to M$ from a genus $g$ surface to a hyperbolic 3-manifold $M$  has area bounded above by $ 4\pi (g-1)$.  If $M$ has sectional curvatures between $-s$ and $-r$, for $0< r<s$, then its area is bounded above by $ 4\pi (g-1) /r$.
\end{lemma}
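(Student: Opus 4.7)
The plan is to combine a Gauss--Bonnet argument with the curvature comparison that is special to harmonic maps from surfaces into negatively curved targets, and then reduce the general (possibly non-immersed) case to the immersed one by approximation.

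First I would handle the case in which $f$ is an immersion. Equip $F$ with the pullback metric $f^{*}g_M$. The central analytic input is the standard consequence of the Eells--Sampson Bochner formula for harmonic maps from surfaces: at any point where $df$ has rank two, the Gaussian curvature $K_F$ of the pullback metric satisfies the pointwise comparison $K_F \le K_M(df(T_xF))$, where $K_M$ denotes the sectional curvature of $M$ on the image tangent plane. This is the precise form of the statement quoted in the paragraph preceding the lemma that harmonic maps are ``more negatively curved than the ambient manifold wherever they are immersed.'' Under the hypothesis $-s \le K_M \le -r < 0$ this gives $K_F \le -r$ pointwise on $F$.

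Next I would apply the Gauss--Bonnet theorem to $(F, f^{*}g_M)$. Since $F$ has genus $g$, one has
\begin{equation*}
\int_F K_F \, dA \;=\; 2\pi\chi(F) \;=\; -4\pi(g-1).
\end{equation*}
Combined with the pointwise inequality $K_F \le -r$, this yields
\begin{equation*}
-r \cdot \mathrm{Area}(F) \;\ge\; \int_F K_F \, dA \;=\; -4\pi(g-1),
\end{equation*}
so that $\mathrm{Area}(F) \le 4\pi(g-1)/r$. Specializing to the hyperbolic case $K_M \equiv -1$ gives $\mathrm{Area}(F) \le 4\pi(g-1)$, as claimed.

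To remove the immersion hypothesis, I would follow the approximation scheme outlined in the paper, invoking Theorem~3.2 of Minsky \cite{Minsky:1992}. The idea is to approximate $f$ in $C^{1}$ by smooth maps $f_n$ which are immersions (for instance by composing with a small generic perturbation transverse to the zero section of $df$, or by taking graph perturbations into $M \times \RR^{N}$ and projecting). The curvature comparison and Gauss--Bonnet argument applies to each $f_n$, giving the uniform area bound $\mathrm{Area}(f_n) \le 4\pi(g-1)/r$, and passing to the limit preserves the inequality since area is lower semicontinuous under $C^{1}$ convergence. The only delicate point in the plan is the curvature comparison itself, which is not entirely elementary away from immersion points; this is why the reduction via approximation by immersions (rather than a direct argument on $f$) is the cleanest route, and is the step I would rely on the cited references to handle rigorously.
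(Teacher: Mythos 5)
Your proposal matches the paper's argument, which appears as the paragraph immediately preceding the lemma rather than as a formal proof: the curvature comparison for harmonic maps from surfaces into negatively curved targets (citing Sampson), the Gauss--Bonnet computation on the pullback metric, and the reduction of the non-immersed case by approximation, with the details delegated to Theorem~3.2 of Minsky. The computation $\int_F K_F\,dA = -4\pi(g-1)$ combined with $K_F \le -r$ and $K_M \equiv -1$ in the hyperbolic case is exactly the bookkeeping the paper leaves implicit, and you have done it correctly.
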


For any Riemannian metric on $F$, a basic inequality relates the energy and area of a map $f$ \cite{EellsLemaire}:
$$
\mbox{Energy}(f) \ge 2 \mbox{Area}(f).
$$

Equality occurs precisely when $f$ is almost conformal, i.e. conformal except possibly at finitely many singular points with zero derivative. In particular, equality holds for an isometric immersion, an immersion of a surface into a Riemannian manifold with the induced metric on the surface.

\begin{lemma} \label{areadrops}
A smooth family of immersions $\{ f_{u,0}: F  \to M, u \in U \}$ from a surface $F$ to a closed negatively curved 3-manifold $M$ is  smoothly deformable to a family of harmonic maps $\{ f_{u,1}: (F, g_u)  \to M\}$ through maps $\{ f_{u,s}, ~ 0 \le  s \le 1 \}$ satisfying
$$
\mbox{Area}(f_{u,s}) \le \mbox{Area}(f_{u,0}).
$$
\end{lemma}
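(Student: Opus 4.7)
The strategy is to equip $F$ with the pullback metric $g_u := f_{u,0}^{*}(g_M)$, where $g_M$ is the Riemannian metric on $M$. Because each $f_{u,0}$ is an immersion, $g_u$ is a genuine Riemannian metric on $F$ that varies smoothly with $u$, and by construction $f_{u,0}\colon (F,g_u)\to M$ is an \emph{isometric} immersion. The basic energy--area inequality recalled just before the lemma therefore holds with equality at $s=0$:
$$
\mbox{Energy}(f_{u,0}) \;=\; 2\, \mbox{Area}(f_{u,0}).
$$

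For each $u$ I then take the deformation $\{f_{u,s}:0\le s\le 1\}$ to be the harmonic map heat flow starting from $f_{u,0}$, reparametrized in $s$ so that $f_{u,1}$ is the harmonic map in the homotopy class of $f_{u,0}$ produced by Lemma~\ref{harmonicfamily}. The decisive property of the heat flow, which is the gradient flow of the energy functional along the tension field, is that $s\mapsto \mbox{Energy}(f_{u,s})$ is non-increasing. Combined with the universal inequality $\mbox{Energy}(f)\ge 2\,\mbox{Area}(f)$, this immediately gives, for every $s\in[0,1]$,
$$
2\, \mbox{Area}(f_{u,s}) \;\le\; \mbox{Energy}(f_{u,s}) \;\le\; \mbox{Energy}(f_{u,0}) \;=\; 2\, \mbox{Area}(f_{u,0}),
$$
which is the desired area bound. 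Smoothness of the resulting two-parameter family $(u,s)\mapsto f_{u,s}$ in $u$ follows from smooth dependence of the heat flow on its initial data and on the domain metric, together with the smooth dependence of the limit harmonic map on $(g_u,u)$ cited from Theorem~3.1 of \cite{EellsLemaire}.

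The main obstacle I anticipate is compatibility with the hypotheses of Lemma~\ref{harmonicfamily}, which requires non-elementary homotopy classes and a compact parameter domain. In the intended application, the maps $f_{u,0}$ parametrize Heegaard sweepout surfaces in a pinched negatively curved manifold, so non-elementarity is automatic on an open dense set of $u$; the degenerate parameters correspond to collapsed spines where the area is zero and the inequality is trivial, so a continuity/limit argument should extend the estimate to the full family. Modulo these boundary issues, the proof is essentially a one-line application of the monotonicity of energy along the harmonic heat flow, leveraged by the clever choice of conformal structure $g_u=f_{u,0}^{*}(g_M)$ on the domain.
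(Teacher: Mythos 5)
Your argument is the same as the paper's: take the pullback metric so each $f_{u,0}$ is an isometric (hence conformal) immersion with $\mbox{Energy}=2\,\mbox{Area}$, run the Eells--Sampson tension-field flow, and combine the monotonicity of energy along the flow with the pointwise inequality $\mbox{Energy}\ge 2\,\mbox{Area}$ to conclude. Your chain of inequalities is in fact stated a bit more carefully than the paper's one-paragraph proof, which loosely says ``the area is bounded above by twice the energy'' when it means half the energy.
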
 
\begin{proof}
We begin by taking the induced metric on the surface $F$ using each of the family of maps $ f_{u,0}$. This gives a family of isometric immersions of $F$, for each of which the energy equals twice the area. The Eells-Sampson process of deforming along the tension field gives an energy decreasing deformation that converges to a harmonic map \cite{EellsSampson}. Since the energy is non-increasing during this flow, and the area is bounded above by twice the energy, the area of each surface in this flow is bounded above by its initial value. 
\end{proof}

\section{Sweepouts} \label{sweepouts}

Consider a family of maps $k: F \times (-1,1) \to M$ such that 
$$
\lim_{t \to \pm 1} \mbox{Area}(k (F, t)) =0
$$ 
and let $F_t$ denote  the surface $k (F, t)$.
By Lemma~\ref{3dgeneral} we know that the volume of a 3-chain bounded by $F_t$ also approaces zero as $t \to \pm 1$.
It follows that there is a $\beta>0$ such that one side of $F_t$ has volume less than  $\mbox{Volume}(M)/10$ if $t \in (-1, -1 + \beta] \cup [1-\beta, 1 )$. In particular $F_{-1 + \beta}$ bounds a 3-chain $ C_{-1 + \beta}$ of volume less than   $\mbox{Volume}(M)/10$ and similarly $F_{1 - \beta}$ bounds such a chain $C_{1 - \beta}$.
Consider the 3-cycle $Z \in H_3(M; \ZZ_2)$ formed  by adding the 3-chains $k(F \times [-1 + \beta, 1-\beta ])$, $ C_{-1 + \beta}$ and $C_{1 - \beta}$.  
We say the family of maps $k $ has {\em degree one} if $Z$ represents the fundamental homology class of $M$ (with $Z_2$-coefficients) and in that case we define it to be a {\em sweepout} of $M$.
Note that changing the value of $\beta$ continuously changes the volume of $Z$, while a change in the homology class of $Z$ would cause the volume to change by the volume of $M$.  Thus the choice of $\beta$ does not affect the question of whether $k $ has degree one.

We digress somewhat to point out that our constructions give a new approach to constructing sweepouts of bounded area.
Pitts and Rubinstein showed the existence of an unstable minimal surface in a 3-manifold using a minimax argument.
The minimal surface they construct has maximal area in a 1-parameter family of surfaces obtained from a sweepout given by a strongly irreducible Heegaard splitting.  Since a genus $g$ minimal surface in a hyperbolic manifold has area less than $4\pi (g-1)$, this implies an area bound for each surface in the sweepout.  
The existence of a sweepout composed of bounded area surfaces has implications on the geometry and Heegaard genusof a 3-manifold, as noted in Rubinstein \cite{Rubinstein} and Bachman-Cooper-White \cite{BCW}.
Harmonic maps give an alternate way to obtain the same area bound implied by Pitts-Rubinstein, without finding a minimal surface and without assuming that a Heegaard splitting is strongly irreducible.

\begin{theorem} \label{boundedarea}
If $M$ is a hyperbolic 3-manifold with a genus $g$ Heegaard splitting then $M$ has a sweepout whose
surfaces each has area bounded above by  $ 4\pi (g-1)$.
\end{theorem}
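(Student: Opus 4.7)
The plan is to begin with the canonical sweepout provided by the Heegaard splitting, apply Eells-Sampson harmonic deformation on a compact middle portion, and patch near the spines so that the full family remains a degree one sweepout whose surfaces all satisfy the stated area bound. Fix spines $\Gamma_i \subset H_i$ of the two handlebodies of the splitting $M = H_1 \cup_S H_2$. The complement $M \setminus (\Gamma_1 \cup \Gamma_2)$ is diffeomorphic to $F_g \times (-1,1)$, and one can choose a parametrization $k_0 \colon F_g \times (-1,1) \to M$ so that $F_t := k_0(\cdot,t)$ shrinks to $\Gamma_1$ as $t \to -1$ and to $\Gamma_2$ as $t \to 1$, running through boundaries of shrinking tubular neighborhoods of the spines. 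In particular $\mathrm{Area}(F_t) \to 0$ as $t \to \pm 1$, so for a sufficiently small $\beta > 0$ one has $\mathrm{Area}(F_t) < 4\pi(g-1)$ on $(-1,-1+2\beta] \cup [1-2\beta,1)$. The family $k_0$ is manifestly a degree one sweepout, each $F_t$ is an immersion, and since $\pi_1(S) \to \pi_1(M)$ is surjective and $M$ is hyperbolic, each $F_t$ is a non-elementary map.

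I would then restrict attention to the compact interval $U = [-1+\beta, 1-\beta]$ and apply Lemma~\ref{areadrops} to the family $\{F_t : t \in U\}$, producing a smooth two-parameter family $f_{t,s}$, $(t,s) \in U \times [0,1]$, with $f_{t,0} = F_t$, with $f_{t,1}$ harmonic, and with $\mathrm{Area}(f_{t,s}) \le \mathrm{Area}(F_t)$. By Lemma~\ref{harmonicarea} the harmonic slice satisfies $\mathrm{Area}(f_{t,1}) \le 4\pi(g-1)$. Next pick a smooth cutoff $\sigma \colon (-1,1) \to [0,1]$ equal to $1$ on $[-1+2\beta, 1-2\beta]$ and to $0$ outside $U$, and define the new family by $k(\cdot, t) = f_{t,\sigma(t)}$ for $t \in U$ and $k(\cdot, t) = F_t$ elsewhere. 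On $[-1+2\beta, 1-2\beta]$ the surface is harmonic, so $\mathrm{Area}(k(\cdot,t)) \le 4\pi(g-1)$; on the transition bands $U \setminus [-1+2\beta, 1-2\beta]$ the monotonicity in Lemma~\ref{areadrops} combined with the choice of $\beta$ gives $\mathrm{Area}(k(\cdot,t)) \le \mathrm{Area}(F_t) < 4\pi(g-1)$; and outside $U$ the original $F_t$ already satisfies the bound. Hence every surface of $k$ has area at most $4\pi(g-1)$.

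It remains to verify that $k$ is still a sweepout of degree one. The family $k$ is homotopic to $k_0$ through $u \mapsto f_{t,u\sigma(t)}$, $u \in [0,1]$, and the 3-cycle $Z$ used to read off the degree varies continuously along this homotopy while its class lies in the discrete group $H_3(M;\ZZ_2)$; therefore it remains the fundamental class, matching that of $k_0$. The main technical obstacles I expect are, first, verifying non-ellementarity of the maps so that Lemmas~\ref{harmonicfamily} and~\ref{areadrops} genuinely apply and deliver a smoothly varying harmonic family on all of $U$, and second, arranging the transition region where $\sigma$ drops from $1$ to $0$ to lie entirely in the band where $\mathrm{Area}(F_t)$ is already below $4\pi(g-1)$, so that monotonicity of area along the flow preserves the bound throughout the interpolation rather than only at the harmonic endpoint.
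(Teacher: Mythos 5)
Your proof follows the paper's core strategy—replace the canonical family of Heegaard surfaces with harmonic maps via Lemma~\ref{areadrops} / Lemma~\ref{harmonicfamily} and then invoke the Gauss--Bonnet area bound of Lemma~\ref{harmonicarea}—so the route is not fundamentally different. However, your version is noticeably more careful than the paper's. The paper's proof simply parametrizes the surfaces over the non-compact interval $(-1,1)$ and applies Lemma~\ref{harmonicfamily}, without addressing two points: that Lemma~\ref{harmonicfamily} is stated for a \emph{compact} parameter set, and that as $t \to \pm 1$ the surfaces collapse onto the spines, so the induced conformal structures degenerate and there is no guarantee the areas of the \emph{harmonic} replacements still tend to $0$ (which is required for the result to remain a sweepout in the paper's sense). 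Your restriction to a compact sub-interval $U = [-1+\beta, 1-\beta]$, the cutoff function $\sigma$, and the observation that the unflowed surfaces in the outer bands already satisfy the area bound all patch exactly these gaps, and your homotopy argument for preserving the degree-one property is a correct way to check the modified family is still a sweepout. One small caveat: what Lemma~\ref{areadrops} delivers is $\mathrm{Area}(f_{t,s}) \le \mathrm{Area}(f_{t,0})$ for all $s$, not literal monotone decrease along the flow; that is precisely what you need in the transition band, so the argument is unaffected, but the word ``monotonicity'' slightly overstates the lemma. Also be sure, when invoking non-elementarity, that you are using surjectivity of $\pi_1(F_t) \to \pi_1(M)$ for the near-spine surfaces via $\pi_1(\partial H_i) \twoheadrightarrow \pi_1(H_i) \twoheadrightarrow \pi_1(M)$; this does hold for boundaries of tubular neighborhoods of a spine, so the step is fine, but it is worth stating explicitly rather than via the incompressibility-flavored remark about $\pi_1(S)$.
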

\begin{proof}
Parametrize each surface of the Heegaard splitting to obtain a smoothly varying family of embedded maps $\{ f_t: (F, g_t)  \to M ,~ -1 < t < 1\}$, where $g_t$ is the induced metric on $F$ under the map $f_t$. By Lemma~\ref{harmonicfamily}, the family of maps is
deformable to a smooth sweepout of harmonic maps $\{ h_t: (F, g_t)  \to M \}$. Each map $h_t$ has area bounded above by  $ 4\pi (g-1)$, by Lemma~\ref{harmonicarea}.
\end{proof}

Given a sweepout of a 3-manifold $M$ and two subsets $L$ and $R$,
we can characterize the direction of  the sweepout relative to $L$ and $R$, describing which of $L$ and $R$ is first engulfed.
Let $\beta$ be a constant that satisfies the conditions of Lemma~\ref{3dgeneral}, so that for $-1 < s \le -1+ \beta$,  $F_s$ bounds a 3-chain of volume less than $\mbox{Volume}(M)/10$. 
If $-1 < t < -1 +\beta$ define  $K_t$ to be the 3-chain $C_{t}$ with boundary $F_t$, given by the smaller volume side of $F_t$.
If $t \ge -1 + \beta$ define  $K_t$ to be the 3-chain obtained by adding the 3-chains  $k (F \times [-1+\beta, t]) $ and $ C_{-1 + \beta}$, again with boundary  $F_t$.
As $t$ increases on $(-1, 1)$ the volume of $K_t$ changes continuously, and satisfies 
$$
\lim_{t \to-1} \mbox{Volume}(K_t \cap (L \cup R) ) = 0
$$
and 
$$
\lim_{t \to 1} \mbox{Volume}(K_t  \cap (L \cup R) ) =   \mbox{Volume}(L \cup R).
$$
Let $Y$ denote the set of points $t \in (-1,1)$ where $K_t $ contains half the volume of $ \mbox{Volume}(L \cup R)$,
$$
Y = \{ t :  \frac{\mbox{Volume}( K_t \cap (L \cup R) )}{{\mbox{Volume}}(L \cup R) } = \frac{1}{2} \}.
$$
 For a generic sweepout this set is finite and odd.

\noindent
{\bf Definition.} 
A point $t \in Y$ is an {\em $L$ point} if 
$$
\mbox{Volume}( K_t \cap L)  >   \mbox{Volume}( K_t \cap R) 
 $$
and an {\em $R$ point} otherwise. 
A sweepout is an {\em $LR$-sweepout} if it has an odd number of $L$ points and an {\em $RL$-sweepout} if if it has an odd number of  $R$ points. 

The Heegaard splitting $E_0$ gives rise to an $LR$-sweepout that begins with surfaces near a graph at the core of $H_L$, sweeps out $L \cup R$ with embedded slices, and ends with surfaces that collapse to a graph at the core of $H_R $. The stabilized Heegaard splitting $G_0$ gives rise to an
$LR$-sweepout by embedded surfaces of genus $2g-1$.  A stabilization adds a loop to each of the graphs forming the cores of $E_0$, the two loops linking once in $M_g$, and each crossing the separating surface $S$.
Between the resulting graphs is a product region which can be filled with Heegaard surfaces of the stabilized splitting. See Figure~\ref{cores}, which shows two cores and a surface in $G_0$ that has been stabilized once. Assuming that $G_0$ and $G_1$ are equivalent, composing the resulting sweepout of $G_0$ with the diffeomorphisms $\{ I_s, ~ 0 \le s \le 1 \}$ gives a family of genus $2g-1$ sweepouts connecting $G_0$ and $G_1$.

\begin{figure}[htbp]  
\centering
\includegraphics[width=4in]{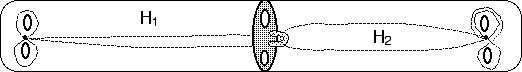} 
\caption{ Stabilized cores.}
\label{cores}
\end{figure}

\begin{lemma} \label{family}
Suppose that $E_0$ is equivalent to $E_1$ after $(g-1)$ stabilizations. 
Then there is a constant $A_0$, independent of the number of blocks $n$, and
a family of surfaces 
$$
\{  f_{s,t} : F  \to M_g, ~ 0 \le s \le 1, ~ -1 < t < 1  \}
$$
satisfying the following conditions:
\begin{enumerate}
\item $F$ has genus $2g-1$
\item  For each $s$, $ \{ f_{s,t} : F  \to M_g, ~ -1 < t < 1 \}$ is a sweepout.
\item  $\{ f_{0,t}(F), ~ -1 < t < 1 \}$ is an $LR$-sweepout.
\item   $\{ f_{1,t}(F), ~ -1 < t < 1  \}$ is an $RL$-sweepout.
\item  Each surface in the two sweepouts $f_{0,t}(F)$ and $f_{1,t}(F)$ has area bounded above by $A_0$.   
\item   Each surface $ \{ f_{s,t}(F), ~  0 \le  s   \le 1, ~ -1 < t  < 1 \}$ is embedded.
 \end{enumerate}
\end{lemma}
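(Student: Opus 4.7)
The plan is to realize the sweepout of $G_0$ in a concrete ``product'' form with area bounded independently of $n$, do the same for $G_1$, and then use the given equivalence $I_s$ between $G_0$ and $G_1$ to interpolate between the two endpoint sweepouts while keeping all surfaces embedded.

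For the area bound at $s=0$, I would realize the $G_0$-Heegaard surface inside the block manifold as a slice $S_t$ together with $g-1$ small tubes attached parallel to it (equivalently, in the product-type form suggested by the Remark after Theorem~\ref{mainthm}). Such a surface is embedded, of genus $2g-1$, and has area bounded by a constant depending only on the geometry of a single block $B$ and on $g$, but not on $n$. The full sweepout $f_{0,t}$ then consists of: collapsing to the stabilized core of $H_L \cup L$ as $t \to -1$; expanding inside $H_L$, whose metric does not depend on $n$; entering $L$ in product form; sweeping across $L$ and then $R$ by translating the slice together with its attached tubes along the block direction; and symmetrically collapsing to the stabilized core of $H_R \cup R$ as $t \to 1$. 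Each surface is embedded and has area at most some $A_0$ depending only on $g$ and the fixed geometries of $H_L$, $H_R$, and $B$. The analogous construction with the two sides reversed gives the sweepout $f_{1,t}$ for $G_1$ with the same bound.

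To produce the full family I would partition $s \in [0,1]$ into three subintervals. On the first, take $f_{s,t} = f_{0,t}$, constant in $s$. On the second, take $f_{s,t} = I_{\sigma(s)} \circ f_{0,t}$ for a reparameterization $\sigma$, so that at the end of this interval we have $I_1 \circ f_{0,t}$, an embedded sweepout of $G_1$. On the third, interpolate by an ambient isotopy of $M_g$ preserving $G_1$ from $I_1 \circ f_{0,t}$ to the bounded-area sweepout $f_{1,t}$; such an isotopy exists because both are embedded sweepouts of the same Heegaard splitting $G_1$ and the space of such sweepouts is path connected. Embeddedness (condition (6)) is preserved throughout because every step is composition with a diffeomorphism. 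The direction conditions (3) and (4) follow because at $s=0$ the sweepout begins at a core in $H_L \cup L$, so $K_t$ engulfs $L$ before $R$, giving an $LR$-sweepout, while at $s=1$ the roles of $L$ and $R$ have been swapped by $I_1$, giving an $RL$-sweepout.

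The main obstacle is the geometric construction in the first step: producing a uniformly bounded-area embedded sweepout of $G_0$ across the long block manifold $L \cup R$. The product picture supplies the correct topological model, but one has to verify smoothness and embeddedness of the sliding motion of the slice and its attached tubes through all the blocks together with the uniform area bound, and then to handle the endpoint transitions in $H_L$ and $H_R$ with area controlled by their fixed geometries. Once these geometric details are established, the three-stage interpolation and the volume-direction checks are largely routine.
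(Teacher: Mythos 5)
Your construction of the bounded-area endpoint sweepout and the use of $f_{s,t}=I_s\circ f_{0,t}$ for an interior range of $s$ match the paper closely. The divergence is at $s=1$: the paper does \emph{not} build a second sweepout and interpolate to it. Instead it takes $f_{1,t}=I_1\circ f_{0,t}$ outright, and obtains the area bound there by first normalizing the isotopy so that, after stretching out a collar around the invariant surface $F_{0,0}$, $I_1$ carries $F_{0,t}$ to $F_{0,-t}$ for all $t\in[-1+\alpha,1-\alpha]$, while for $t$ near $\pm1$ the surfaces $F_{s,t}$ lie in small neighborhoods of the images of the core graphs under $I_s$ and hence have small area. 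With that normalization the $s=1$ area bound (and the $RL$ property) is automatic and the lemma closes with no further input.

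The step you add in its place is a genuine gap. You propose to join $I_1\circ f_{0,t}$ to your independently constructed $f_{1,t}$ by an ambient isotopy of $M_g$ preserving $G_1$, on the grounds that ``the space of embedded sweepouts of a fixed Heegaard splitting is path connected.'' That is not proved, not cited, and not obvious. Making it precise would require, at minimum: uniqueness of cores up to ambient isotopy within each handlebody; a Hatcher-type statement that the space of product foliations of the complementary $F\times\mathbb{R}$ region is connected; and control of the parameterizations, since a sweepout in this paper is a family of maps $F\to M_g$, not merely a family of image surfaces, and the two parameterizations could a priori differ by a nontrivial mapping class of $F$. None of this is addressed, and none of it is needed once you notice that $I_1$ can be adjusted so that $I_1\circ f_{0,t}$ itself is the desired bounded-area sweepout. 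In short: the geometric setup and first two stages of your interpolation are on the right track, but the third stage rests on an unjustified connectivity claim that the paper deliberately avoids.
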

\begin{proof}
$M_g$ is formed from the union of a handlebody $H_L$, $n$-blocks forming $L$, $n$-bocks forming $R$, and a second handlebody $H_R $. The geometry of a single block, and of each of $H_L, H_R $, does not depend on $n$.  Pick a core for each of $H_L, H_R $ and foliate the complement of this core in each handlebody by embedded Heegaard surfaces, connecting the core to the slice that forms the boundary of $H_L$.  Do the same for $H_R $, and fill $L$ and $R$ with interpolating slices. This gives a foliation of the complement of the two cores in $M_g$  by genus $g$ leaves $\{L_t,~ ~ -1 < t < 1  \}$. 

Now stabilize by adding $(g-1)$ loops to each core graph and $(g-1)$ handles to each surface between the two core graphs.  By adding thin handles, this can be achieved while increasing the area of any surface by  less than $a_0$ and with the additional $(g-1)$ 1-handles bounding a region of volume less than $v_0$, where the constants $a_0$ and  $v_0$ can be chosen arbitrarily small, independently of $n$. For our purpose it suffices to pick $v_0 < V_B/10$, where $V_B$ is the volume of a single block, and $a_0 < 1$.

Let $F$ be a surface of genus $ 2g-1$ and
construct maps  $\{ f_{0,t} : F \to M_g, ~ -1 < t < 1  \}$ that smoothly parametrize the stabilized surfaces. 
We will refer to $f_{0,t}(F)$ as $F_{0,t}$.
Set the value of the constant $A_0$ to be the largest area of the surfaces $\{ F_{0,t}, ~ -1 < t < 1  \}$.
Note that our construction gives a value for $A_0$ that is independent of the number of blocks $n$ in $M_g$.

The Heegaard splitting $G_0 = (\bar H_1, \bar  H_2,  \bar S)$ obtained by stabilizing  $E_0$ $(g-1)$ times is assumed to be equivalent to the reversed splitting $G_1 = (\bar H_2, \bar  H_1, - \bar S)$, where  $\bar S = F_{0,0}$ and $- \bar S$ indicates the orientation of $\bar S$ has reversed. So an isotopy  $I_s , ~ 0 \le s \le 1 $ from the identity map $I_0$ to a diffeomorphism $I_1$ carries $(\bar H_1, \bar H_2, \bar S)$ to $(\bar H_2, \bar  H_1, - \bar S)$. Construct the family of surfaces $f_{s,t} : F  \to M_g$ by defining 
$$
f_{s,t} = I_s \circ f_{0,t} : F  \to M_g
$$
and let $F_{s,t}$ denote $f_{s,t} (F)$.

For any constant $\alpha > 0$ we can assume, by stretching out a collar around the invariant surface $F_{0,0}$, that  $I_1$ carries $F_{0,t}$ to  $F_{0,-t}$ for each  $t \in  [-1+ \alpha, 1- \alpha ] $.
For $\alpha$ sufficiently small the embedded surfaces 
$$
\{ F_{s,t }; t \in ( 0,-1+ \alpha ] \cup [ 1 - \alpha ,1 ) \}
$$
lie in small neighborhoods of the images of the  core graphs of  $G_0$ under $I_s$, have area uniformly bounded above by $a_0$, and bound submanifolds having volume less than $v_0$.

The Heegaard splitting $E_0$ gives rise to a sweepout of $M_g$  by genus $g$ surfaces $\{ L_t , ~ -1 < t < 1 \}$ starting near the core of $H_L$ and ending near the core of $H_R$. This sweepout foliates the complement of the two core graphs. The surface $L_t $ bounds a 3-chain $K_t$ that fills up the side containing $\{ \cup L_{t'} : t' <t \}$. $K_0$ bisects the volume of $L \cup R$  and contains $L$ but not $R$. The surfaces  $L_t $, when parametrized, give an $LR$-sweepout with  $\mbox{Volume}(K_0 \cap L) = nV_B$ while $\mbox{Volume}(K_0 \cap R) = 0$.  The surface $F_{0,t}$, obtained by stabilizing $L_t$, bounds a 3-chain $K_{0,t}$ whose volume of intersection with $L$ and $R$ differs by less than $v_0 < v_B/10$ from that of $K_t$. Therefore $F_{0,t}$ also gives an $LR$-sweepout.  By the same argument applied to the stabilization of $E_1$, we have that  $F_{1,t}$ gives an $RL$-sweepout. 

All properties now follow.
\end{proof}

We now show that a path of sweepouts in $M_g$ whose surfaces have area uniformly bounded by a constant $A_0$ cannot start with an $LR$-sweepout and end with $RL$-sweepout, if $n$ is sufficiently large.
Define 
$$
V_L =  \mbox{Volume } (L) =  \mbox{Volume } (R)
$$
sot that 
$
2V_L =  \mbox{Volume }(L \cup R)  .
$
 
\begin{lemma} \label{nofamily}
Suppose $A_0$ is a constant and that $M_g$ is constructed with $2n$-blocks.  
Then for $n$ sufficiently large 
there does not exist a smooth family of maps 
$$
\{ h_{s,t} : F  \to M_g, ~ 0 \le s \le 1, ~ -1 < t < 1  \} 
$$
from a  surface $F$  to $M_g$ satisfying the following conditions:
\begin{enumerate}
\item $F$ has genus less than $2g.$
\item   For each fixed $s$, $h_{s,t} : F  \to M_g$ is a sweepout.
\item  Each surface $h_{s,t}(F)$ has area bounded above by $A_0$.   
\item   $\{ h_{0,t}(F), ~ -1 < t < 1  \} $ is an $LR$-sweepout.
\item   $\{  h_{1,t}(F), ~ -1 < t < 1 \}  $ is an $RL$-sweepout.
\end{enumerate}
\end{lemma}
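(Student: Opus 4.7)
The plan is to argue by contradiction in two stages: first extract from the supposed family a single sweepout surface that bisects both $L$ and $R$ simultaneously; then rule out the existence of any such bisecting surface of genus $<2g$ and area $\le A_0$ for $n$ large.

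For the first stage, I would track as $s$ varies the parity of the number of $L$-points in the finite set $Y_s$ of times at which $K_{s,t}$ bisects $\mbox{Volume}(L\cup R)$. By hypothesis this parity is odd at $s=0$ and even at $s=1$, so must change at some critical $s^*$. Generically such a parity change arises either when an $L$-point and an $R$-point collide in $Y_s$ (birth or death) or when a single point transitions between $L$ and $R$; in either case a limiting argument produces $(s^*, t^*)$ at which the $3$-chain $K := K_{s^*, t^*}$ satisfies
\[
\mbox{Volume}(K\cap L) = \mbox{Volume}(K\cap R) = V_L/2 .
\]
The bounding surface $F := h_{s^*,t^*}(F)$ has area at most $A_0$ and genus at most $2g-1$.

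For the second stage, the coarea inequality~(\ref{eqn:coarea2}) gives $\int \mbox{Length}(F\cap S_t)\,dt \le C_2 A_0$ over $L\cup R$, so for $n$ large almost every slice there is short (length $<\epsilon$). I would pick $2g+1$ short slices subdividing $L\cup R$ into $2g+2$ sub-block-manifolds $P_0,\ldots,P_{2g+1}$ of large volume, with one chosen near the $L$-$R$ interface so that $L$ and $R$ are each (up to a negligible sliver) a union of consecutive pieces. On each short slice the curves of $F\cap S_{\tau_j}$ are null-homotopic; capping each by its small-side disk supplied by Lemma~\ref{surfaceisoperimetric} yields closed surfaces $\hat F_i\subset P_i$. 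Since cutting along circles and capping by disks cannot increase total genus, $\sum_i \mbox{genus}(\hat F_i) \le 2g-1$, so at most one $\hat F_i$ has genus $\ge g$; call that piece \emph{bad} and the others \emph{good}. Each good $\hat F_i$ has genus $<g$ inside $P_i \cong F_g\times I$, so the projection-degree bound $\mbox{genus}(\hat F_i) \ge 1 + |\deg|(g-1)$ forces the degree to be zero, whence $\hat F_i$ is null-homologous in $P_i$. Lemma~\ref{3disoperimetric} then bounds the volume of a $3$-chain that $\hat F_i$ bounds in $P_i$ by a constant $B_0$ independent of $n$.

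Because at most one bad piece exists, one of $L, R$ consists entirely of good pieces; say $R$. Chaining Lemma~\ref{gbound} across consecutive good pieces of $R$ then forces the ``large sides'' of all short slices inside $R$ to lie on a single side of $F$ in $M_g$; without loss of generality call it $K^c$. In each good $P_i\subset R$ the complement $P_i\setminus\hat F_i$ splits into a thin side of volume $\le B_0$ and a thick side containing a neighborhood of these large sides; the thick side consequently lies entirely in $K^c$, so $\mbox{Volume}(K\cap P_i) \le B_0$. Summing over the $O(g)$ good pieces of $R$ gives $\mbox{Volume}(K\cap R) \le O(g)\,B_0$, which is incompatible with the bisection $\mbox{Volume}(K\cap R) = V_L/2$ once $n$ is large, since $V_L = nV_B\to\infty$. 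The symmetric possibility (large sides on the $K$ side, yielding $\mbox{Volume}(K\cap R) \ge V_L - O(g)B_0$) is ruled out identically. The main obstacles I anticipate are making the parity-transition limiting argument in stage one fully rigorous, and verifying that the thin/thick decomposition of $P_i\setminus\hat F_i$ genuinely corresponds to the $K$/$K^c$ partition of $M_g$ once the capping disks on $\partial P_i$ are pushed slightly into $P_i$.
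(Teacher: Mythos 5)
Your proposal follows the same overall strategy as the paper's proof, and I believe it would succeed, but both stages are carried out less efficiently than in the paper.

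In stage one, the paper does not track parities of $Y_s$ and argue about generic bifurcations. Instead it defines the set $Q=\{(s,t):\mathrm{Volume}(K_{s,t}\cap(L\cup R))=V_L\}$, perturbs $1/2$ to a regular value so that $Q$ is a compact $1$-manifold in the $(s,t)$ rectangle, and then observes that any component of $Q$ containing both an $L$ point and an $R$ point must, by the intermediate value theorem along that arc, contain a point $(s^*,t^*)$ with $\mathrm{Volume}(K_{s^*,t^*}\cap L)=\mathrm{Volume}(K_{s^*,t^*}\cap R)=V_L/2$. Once such bisecting points are ruled out, each component is monochromatic ($L$ or $R$), and a straightforward boundary-parity count on the arcs of $Q$ meeting $s=0$ and $s=1$ shows the two end sweepouts have the same type, a contradiction. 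Your ``limiting argument produces $(s^*,t^*)$'' gestures at the same conclusion but leaves the most delicate point---making the transition rigorous---unaddressed, whereas the $1$-manifold formulation handles it cleanly and avoids any case analysis of how $Y_s$ can bifurcate.

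In stage two you have over-engineered. The paper does not subdivide $L\cup R$ into $2g+2$ sub-block-manifolds; it applies Lemma~\ref{3disoperimetric} directly to the restriction of the bisecting surface to $L$ (equivalently, to $R$), concluding that $\mathrm{Volume}(K_{s^*,t^*}\cap L)/V_L$ lies in $[0,0.1]\cup[0.9,1]$ for $n$ large, which is incompatible with equalling $1/2$. Your genus-accounting observation---cutting along the curves of intersection with the short slices and capping cannot increase total genus, so at most one capped piece has genus $\ge g$---is exactly the (implicit) reason the paper's direct application is legitimate: since $F$ has genus $<2g$, at least one of the capped-off pieces in $L$ or in $R$ has genus $<g$, and Lemma~\ref{3disoperimetric} applies to that one. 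Two pieces already suffice; the $2g+1$ short slices and the null-homology argument via the Kneser-type degree bound, the chaining of Lemma~\ref{gbound} across $O(g)$ consecutive pieces, and the $O(g)B_0$ volume sum all reproduce work that Lemma~\ref{3disoperimetric} was designed to package. The concern you raise at the end, that the thin/thick decomposition in $P_i$ must be matched up with $K/K^c$, is real; the paper sidesteps it with the mod-$V_L$ observation (``any two $3$-chains in $L$ with the same boundary have volumes in $L$ that sum to a multiple of $V_L$''), which lets you pass from the Lemma~\ref{3disoperimetric} bound to a constraint on $\mathrm{Volume}(K\cap L)$ without identifying the chains pointwise. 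Adopting that step would simplify your stage two considerably.
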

\begin{proof}
Suppose such a family exists and let $F_{s,t}  $ denote $ h_{s,t}(F)$.
For each $F_{s,t}$ we construct a 3-chain $K_{s,t}$ with boundary $F_{s,t}$.
Pick $\beta$ sufficiently small to satisfy the conditions of 
in Lemma~\ref{3dgeneral} for each $0 \le s \le 1$. 
Then for each $s$ and each $-1< t_0 \le -1+ \beta$ there is a 3-chain $ C_{s, t_0} $  with boundary  $F_{s, {t_0}} $ and volume less than $V_L/10$.
If $-1 < t < -1 +\beta$ take  $K_{s,t}$ to be equal to $ C_{s,t} $. 
If $-1 +\beta < t < -1 $ take  $K_{s,t}$ to be the sum of $ C_{s,-1+\beta} $ and 
$ \bigcup_{-1+ \beta  \le t' \le t} F_{s,t'} $.

The volume of $K_{s,t}$ varies continuously with $s$ and $t$, giving a continuous function from
$[0,1] \times (-1,1) \to \RR $. For each fixed $s$ the surfaces $F_{s,t}$ sweep out $M_g$ with degree one , so
$$
\lim_{t \to -1} \mbox{Volume} (K_{s,t} \cap (L \cup R) )=0
$$
and 
$$
\lim_{t \to 1} \mbox{Volume} (K_{s,t} \cap (L \cup R) )= 2V_L.
$$
Set
$$
Q = \{   (s,t) :  \frac {\mbox{Volume}(  K_{s,t} \cap (L \cup R)  )}{ 2V_L} = \frac{1}{2}  \} . 
$$
If necessary, perturb the value 1/2 used to to define $Q$ to a regular value of the volume function evaluated on the rectangle. We then have that  $Q$ is a 1-manifold and that
any path from the edge $t=-1$ to the edge $t=1$ of the $(s,t)$ rectangle must cross $Q$. 
It follows that there is a path contained in $Q$  connecting the edges $s=0$  and $s=1$ of the $(s,t)$ rectangle, as in Figure~\ref{st}.

\begin{figure}[htbp]  
\centering
\includegraphics[width=1.5in]{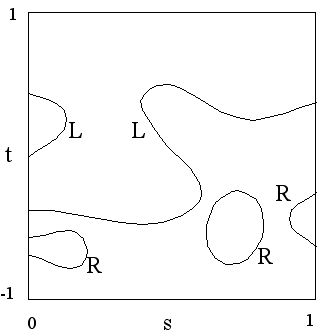} 
\caption{Each curve in $Q$ consists entirely of $L$ points or $R$ points.}
\label{st}
\end{figure}

We now claim that each component of $Q$ consists entirely of $L$ points or entirely of $R$ points.
If a component of $Q$ has both $L$ points and $R$ points then there is a point $ (s,t) \in Q $ on that component where 
$$
\mbox{Volume}(K_{s,t}  \cap L)   = \mbox{Volume}( K_{s,t} \cap R) 
$$
and since these two volumes sum to $V_L  $, we have
\begin{equation}
 \frac { \mbox{Volume}(  K_{s,t} \cap L ) }{ 2V_L} =  \frac { \mbox{Volume}(  K_{s,t} \cap R )  }{ 2V_L} =   \frac{1}{4} 
  \label{eqn:1/4}
\end{equation}
 
The area of  $F_{s,t}$ is bounded above by $A_0$, so Lemma~\ref{3disoperimetric} implies that for $n$ sufficiently large,   $F_{s,t}$ bounds a 3-chain in $L$ with volume less than $(0.1)V_L$.
Since any two 3-chains in $L$ with the same boundary have volumes in $L$ that sum to a multiple of $V_L $, we have 
$$
0  \le \frac{ \mbox{Volume}( K_{s,t} \cap L) } { V_L }  \le 0.1 .
$$
or
$$
0.9 \le \frac{ \mbox{Volume}( K_{s,t} \cap L) } { V_L }  \le 1 .
$$
As a fraction of $2V_L$
$$
0  \le \frac { \mbox{Volume}(K_{s,t} \cap L ) } {2V_L}   \le  0.05 
$$ 
or
$$
0.45  \le   \frac { \mbox{Volume}(K_{s,t} \cap L ) } {2V_L}  \le   0.5 .
$$ 
This contradicts Equation~\ref{eqn:1/4}. We conclude that components of $Q$ consist either entirely of  $L$ or entirely of $R$ points.

A component curve in $Q$ either meets each edge
$s=0$ and $s=1$ once, meets one edge of the $(s,t)$ rectangle twice, or is disjoint from both.
The parity of the number of $L$ points and the number of $R$ points on the two families
$h_{0,t}(F)$ and  $h_{1,t}(F)$ is the same for each edge $s=0$ and $s=1$, and thus either both are $LR$-sweepouts or both are $RL$-sweepouts.  
\end{proof}

We now show how to replace a family of sweepouts that starts and ends with sweepouts whose surfaces have area less than $A_0$, but with no control on the area of surfaces in the intermediate sweeputs, with a homotopic family of sweepouts whose surfaces have area uniformly bounded by $A_0$. 

\begin{lemma} \label{harmonize}
Let $\{ f_{s,t,0}: F \to M_g, ~ 0 \le s \le 1,~ -1 <   t  <  1\}$ be a family of surface maps such that
\begin{enumerate}
\item  The genus of $F$ is less than $2g$.
\item  For each $s$, $ \{ f_{s,t,0} : F  \to M_g , ~ -1 <  t <  1\}$ is a sweepout.
\item  Each surface in the two sweepouts $f_{0,t,0}(F)$ and $f_{1,t,0}(F)$ has area bounded above by a constant $A_0$.   
\item  $f_{0,t,0}(F), ~ -1 < t < 1  $ is an $LR$ sweepout.
\item  $f_{1,t,0}(F), ~ -1 < t < 1  $ is an $RL$ sweepout.
\item Each surface $ \{ f_{s,t,0} : F  \to M_g , ~ 0 \le s \le 1, ~ -1  <  t < 1\}$ is immersed.
\end{enumerate}
Then if $n$ is sufficiently large, there is a family of maps  
$$
\{ f_{s,t,u}: F \to M_g, ~ 0 \le s \le 1,~ -1\le  t \le 1, ~ 0 \le u \le 1\}
$$ 
such that
\begin{enumerate}
\item  For each fixed $s,u$; $ \{ f_{s,t,u} : F  \to M_g, ~ -1\le  t \le 1 \}$ is a sweepout.
\item  Each surface $f_{s,t,1}(F)$ is harmonic, with area at most  $8\pi(2g-1)$. 
\item   $f_{0,t,1}(F), ~ -1 < t < 1  $ is an $LR$ sweepout.
\item  $f_{1,t,1}(F), ~ -1 < t < 1  $ is an $RL$ sweepout.
\end{enumerate}
\end{lemma}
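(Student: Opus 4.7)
The plan is to apply the Eells--Sampson harmonic heat flow parametrically, extending the single-map construction of Lemma~\ref{areadrops}. First I would equip $F$ with the Riemannian metric $g_{s,t}$ pulled back from $M_g$ via the immersion $f_{s,t,0}$, so that each $f_{s,t,0}:(F, g_{s,t}) \to M_g$ is an isometric immersion. Then I would flow along the tension field for time $u \in [0,1]$ to obtain a smooth deformation $f_{s,t,u}$ whose terminal map $f_{s,t,1}$ is harmonic. Smoothness of the family in $(s,t)$ is provided by Lemma~\ref{harmonicfamily}, and smoothness in $u$ by parabolic regularity, so $(s,t,u) \mapsto f_{s,t,u}$ will be smooth on its domain.

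Conclusions (1) and (2) will then follow quickly. For (2), the sectional curvature of $M_g$ lies in $[-3/2,-1/2]$, so Lemma~\ref{harmonicarea} gives
$$
\mbox{Area}(f_{s,t,1}(F)) \le \frac{4\pi(2g-2)}{1/2} = 8\pi(2g-2) \le 8\pi(2g-1).
$$
For (1), each $(s,u)$-slice will remain a sweepout: the degree-one condition on the associated $\ZZ_2$-cycle $Z_{s,u}$ is a homotopy invariant under the $u$-deformation, and the boundary collapse $\mbox{Area}(f_{s,t,u}(F)) \to 0$ as $t \to \pm 1$ will persist because, by Lemma~\ref{areadrops}, area is non-increasing in $u$ for each fixed $(s,t)$. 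A minor technicality --- that the flow need not literally collapse to the core graphs at the endpoints --- can be absorbed by interpolating with $f_{s,t,0}$ on a thin collar $t \in (-1,-1+\eta] \cup [1-\eta,1)$.

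The substantive step is the preservation of the $LR$/$RL$ distinction in conclusions (3) and (4), and this is where I expect the hard part of the argument. Taking $s = 0$, the hypothesis gives $\mbox{Area}(f_{0,t,0}(F)) \le A_0$, and Lemma~\ref{areadrops} preserves this bound throughout the flow, so $\mbox{Area}(f_{0,t,u}(F)) \le A_0$ for every $u \in [0,1]$. For $n$ sufficiently large, Lemma~\ref{3disoperimetric} then forces every 3-chain that $f_{0,t,u}(F)$ bounds inside $L$ or inside $R$ to have volume less than $V_L/10$. Exactly as in the proof of Lemma~\ref{nofamily}, this rules out the configuration $\mbox{Volume}(K_{0,t,u} \cap L) \approx \mbox{Volume}(K_{0,t,u} \cap R) \approx V_L/2$, so at every half-volume crossing one of $L$ or $R$ strictly dominates and the crossing is unambiguously an $L$- or $R$-point. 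As $u$ varies, half-volume crossings move continuously and can appear or disappear only in cancelling pairs of the same type, since an $L$-point colliding with an $R$-point would have to pass through the forbidden configuration. The parity of $L$-points is therefore constant along $s=0$, so $f_{0,t,1}$ remains an $LR$-sweepout; the identical argument at $s=1$ gives an $RL$-sweepout.

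The heart of the plan is this last parity-stability argument. What makes it work is the interplay between the area non-increase of the heat flow from an isometric-immersion start (Lemma~\ref{areadrops}) and the volume control of Lemma~\ref{3disoperimetric} for $n$ large: together these rule out the only configurations in which the $LR$/$RL$ parity could change. Everything else reduces to routine applications of the harmonic-map lemmas of Section~\ref{harmonic}.
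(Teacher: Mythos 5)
Your proposal is correct and follows essentially the same route as the paper: pull back the metric so each $f_{s,t,0}$ is an isometric immersion, flow along the tension field via Lemma~\ref{areadrops}, bound the terminal areas by Lemma~\ref{harmonicarea}, and then preserve the $LR$/$RL$ type of the $s=0$ and $s=1$ sweepouts by observing that the $u$-parameterized family at $s=0$ (resp. $s=1$) is a bounded-area family of sweepouts to which the argument of Lemma~\ref{nofamily} applies. The only cosmetic difference is that you re-derive the parity-stability conclusion from the half-volume-crossing analysis, whereas the paper invokes Lemma~\ref{nofamily} directly as a black box on the $u$-family.
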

\begin{proof}
We define a family of Riemannian metrics $h_{s,t}$ on $F$ for each  $~ 0 \le s \le 1,~ -1 <  t < 1$ by taking the induced metric pulled back from $M_g$ by the immersion $f_{s,t,0}$.  Then each map $f_{s,t,0}$ is conformal and has energy is equal to twice its area.  We  flow the family $f_{s,t,0}$ to a family of harmonic maps $f_{s,t,1}$ as in Lemma~\ref{areadrops}, with the area of each harmonic surface $f_{s,t,1}(F)$ less than $A_0$.
Each surface in the sweepout $f_{0,t,0}(F)$ has area bounded above by $A_0$, and this area bound is maintained for each surface $ \{ f_{0,t,u}(F), ~ 0 \le u \le 1  \}$ in the homotopy to harmonic maps, as in Lemma~\ref{harmonicarea}.  Similarly the homotopy to harmonic maps of the sweepout $f_{1,t,0}(F)$ given by  $ \{ f_{1,t,u}(F) , ~ 0 \le u \le 1  \}$   has surfaces whose area is uniformly bounded above by $A_0$.
Thus if $n$ is sufficiently large, the sweepout   $ \{ f_{0,t,1}(F) \}$  is an $LR$ sweepout, by Lemma~\ref{nofamily} and similarly $ \{ f_{1,t,1}(F)  \}$  is an $RL$ sweepout.
\end{proof}
 
 \section{Inverting Heegaard surfaces} \label{mainproof}

We now prove the main result:
\begin{proof}[Proof of Theorem~\ref{mainthm}]
We begin with the two genus $g$ Heegaard foliations, $E_0$ and $E_1$ of $M_g$.
We will show that these splittings are not $k$-stably equivalent for $k <g$ if $n$ is sufficiently large. It suffices to take $k=g-1$.

Assume to the contrary that for all $n$, $G_0$ and $G_1$ are  equivalent after $g-1$ stabilizations.
For $n$ large, Lemma~\ref{family} implies there is a family of sweepouts 
$$
\{ f_{s,t,0} : F  \to M_g, ~ 0 \le s \le 1, ~ -1 < t < 1  \}
$$ 
of genus $2g-1$ with $\{ f_{0,t,0}(F), ~ -1 < t < 1  \}$ an $LR$-sweepout,  $\{f_{1,t,0}(F), ~ -1 < t < 1  \}$ an $RL$-sweepout and that each surface in the two sweepouts $f_{0,t,0}(F)$ and $f_{1,t,0}(F)$ has area bounded above by $A_0$. 
Lemma~\ref{harmonize}  implies that this family    can be deformed to a new family of sweepouts 
$$ 
\{f_{s,t,1} : F  \to M_g, ~ 0 \le s \le 1, ~ -1 < t < 1  \}
$$ 
in which   $\{ f_{0,t,1}(F), ~ -1 < t < 1  \}$
is an $LR$-sweepout,  $\{f_{1,t,1}(F), ~ -1 < t < 1  \}$ is an $RL$-sweepout and each surface  $f_{s,t,1}(F)$ has area bounded above by $A_0$.  This is a path of sweepouts all of whose surfaces have area less than $A_0$ that connects an $LR$-sweepout  to a $RL$-sweepout.  Lemma~\ref{nofamily} states that no such path of sweepouts can exist, contradicting the assumption that fewer than $g$ stabilizations can make $E_0$ and $E_1$ equivalent.
\end{proof}

\section{A Hyperbolic Example} \label{I-bundle}

The Riemannian manifolds $M_g$ used in our construction were negatively curved, but not hyperbolic. We now show how to construct a family of hyperbolic manifolds that give somewhat weaker lower bounds on the number of stabilizations required to make two genus-$g$ splittings equivalent. Note that block manifolds cannot be isometrically embedded into a hyperbolic 3-manifold in which their slices are Heegaard surfaces, because the fibers of a surface bundle lift to planes in the universal cover. However there is an isometric embedding of block manifolds into a hyperbolic manifold in which slices are separating incompressible surfaces. These surfaces become Heegaard surfaces after two stabilizations. 

Let $N_0$ be a hyperbolic 3-manifold that is a union of two $I$-bundles over a non-orientable surface, glued along their common genus $k$ boundary surface, where $k$ is an integer greater than one. Such hyperbolic manifolds are double covered by a hyperbolic surface bundle over $S^1$. Some explicit examples can be found in \cite{Reid}. The boundary surface of each $I$-bundle has a neighborhood isometric to a neighborhood of a fiber in its double cover. We can cut open along this fiber and insert a block manifold with arbitrarily many blocks to obtain a hyperbolic manifold $N$, still constructed as a union of two I-bundles.  We let  $S$ denote a surface separating the two I-bundles in the center of the block manifold.

Removing a neighborhood of an interval fiber from an $I$-bundle with a genus $k$ boundary surface results in a handlebody of genus $k+1$.  Thus $N$ has a  genus $g=k+2$ Heegaard surface $S'$ obtained by adding a 1-handle to each side of $S$, with the core of each 1-handle an interval in each I-bundle. The two orderings of these handlebodies give rise to two Heegaard splittings of $N$, and to corresponding sweepouts that fill the two I-bundles in opposite order. The arguments used on $M_g$ in Section~\ref{mainproof} now apply to show that $N$ has two genus $g$ splittings that require no less than $g-4$ stabilizations to become equivalent.

 \end{document}